\def\R{\mathbb{R}}              
\def\L2{{\cal L}_2[0,T)}
\newcommand{\calR}{{\cal R}}  
\newcommand{\rline}{{\mathbb R}}
\newcommand{\sbmn}[1]{\left.\begin{smallmatrix} #1
   \end{smallmatrix}\right.}
\newcommand{\rfb}[1]{\mbox{\rm
   (\ref{#1})}\ifx\undefined\stillediting\else:\fbox{$#1$}\fi}
\newcommand{\dd}   {{\rm d}\hbox{\hskip 0.5pt}}
\newtheorem{theorem}{Theorem}
\newtheorem{proposition}{Proposition}
\newtheorem{lemma}{Lemma}
\newtheorem{corallary}{Corallary}
\newtheorem{definition}{Definition}
\newtheorem{remark}{Remark}
\newtheorem{example}{Example}
\newenvironment{proof}[1][Proof]{\begin{trivlist}
\item[\hskip \labelsep {\it #1.}]}{\end{trivlist}\qed}
\def\l2{{\cal L}_2[0,T)}
\def\L2e{{\cal L}_{2e}}
\def\begequarr{\begin{eqnarray*}}
\def\endequarr{\end{eqnarray*}}
\def\begarr{\begin{array}}
\def\endarr{\end{array}}
\def\begequ{\begin{equation}}
\def\endequ{\end{equation}}
\def\begdes{\begin{description}}
\def\enddes{\end{description}}
\def\begenu{\begin{enumerate}}
\def\begite{\begin{itemize}}
\def\endite{\end{itemize}}
\def\endenu{\end{enumerate}}
\def\lef[{\left[\begin{array}}
\def\rig]{\end{array}\right]}
\def\qed{\hfill$\Box \Box \Box$}
\def\begcen{\begin{center}}
\def\endcen{\end{center}}
\def\begrem{\begin{remark}\rm}
\def\endrem{\end{remark}}
\def\begrem{\begin{corallary}\rm}
\def\endrem{\end{corallary}}
\begin{document}

\begin{frontmatter}

\title{On the Characterization of the Duhem Hysteresis Operator with Clockwise Input-Output Dynamics}
\author[ruiyue]{Ruiyue Ouyang}
\ead{r.ouyang@rug.nl}
\author[vincent]{Vincent Andrieu}
\ead{vincent.andrieu@gmail.com}
\author[bayu]{Bayu Jayawardhana$^\dag$}
\ead{bayujw@ieee.org (Corresponding author)}
\address[ruiyue]{Dept. Discrete Technology and Production Automation, University of Groningen, Groningen 9747AG, The Netherlands}
\address[vincent]{Universit\'e Lyon 1, Villeurbanne; CNRS, UMR 5007, LAGEP. 43 bd du 11 novembre, 69100 Villeurbanne, France}
\address[bayu]{Dept. Discrete Technology and Production Automation, University of Groningen, Groningen 9747AG, The Netherlands}

\begin{abstract}
In this paper we investigate the dissipativity property of a certain class of Duhem hysteresis operator, which has clockwise (CW) input-output (I/O) behavior. In particular, we provide sufficient conditions on the Duhem operator such that it is CW and propose an explicit construction of the corresponding function satisfying dissipation inequality of CW systems. The result is used to analyze the stability of a second order system with hysteretic friction which is described by a Dahl model.
\end{abstract}

\begin{keyword}
Hysteresis\sep clockwise I/O dynamics \sep  dissipative systems

\end{keyword}

\end{frontmatter}


\section{Introduction}
Hysteresis is a common nonlinear phenomena that is present in diverse physical systems, such as piezo-actuator, ferromagnetic material and mechanical systems. From the perspective of input-output behavior, the hysteretic phenomena can be characterized into counterclockwise (CCW) input-output (I/O) dynamics \cite{ANGELI2006}, clockwise (CW) I/O dynamics \cite{PADTHE2005}, or even more complex I/O map (such as, butterfly map \cite{BERTOTTI2006}). For example, backlash operator generates CCW I/O dynamics; elastic-plastic operator generates CW I/O dynamics and Preisach operator can have either CCW or CW I/O dynamics depending on the weight of the hysterons which are used in the Preisach model \cite{BROKATE1996, MACKI1993, LOGEMANN2003}.

In the recent work by Angeli \cite{ANGELI2006}, the counterclockwise (CCW) I/O dynamics of a single-input single-output system is characterized by the following inequality
\begin{equation}\label{CCWineq}
\liminf_{T\rightarrow \infty}\int^T_0{\dot{y}(t)u(t)dt > -\infty},
\end{equation}
where $u$ is the input signal and $y$ is the corresponding output signal. It is assumed that $u\in U$ where $U$ is the set of input signals for which $y$ exists and is well defined for all positive time. Compare with the classical definition of passivity \cite{WILLEMS1972}, it can be interpreted as the system is passive from the input $u$ to the time derivative of the corresponding output $y$. In particular, \rfb{CCWineq} holds if there exists a function $H: \rline^2\rightarrow \rline_+$ such that
\begin{equation} \label{ccwderiv}
\frac{\dd H(y(t),u(t))}{\dd t} \leq \dot{y}(t)u(t).
\end{equation}
Indeed, integrating \rfb{ccwderiv} from $0$ to $\infty$ we obtain \rfb{CCWineq}.

Correspondingly, clockwise (CW) I/O dynamics can be described by the following dissipation inequality
\begin{equation}\label{CWineq}
\liminf_{T\rightarrow \infty}\int^T_0{\dot{u}(t)y(t)dt > -\infty}.
\end{equation}
The notions of counterclockwise (CCW) I/O and  clockwise (CW) I/O are also discussed in \cite{OH2005}.

In our previous results in \cite{JAYAWARDHANAAUTO2012}, we show that for a certain class of Duhem hysteresis operator $\Phi:u \mapsto \Phi(u,y_0):=y$, we can construct a function $H_{\circlearrowleft} : \rline^2 \rightarrow \rline_+$ which satisfies
\begin{equation}\label{passivehysteresis}
\frac{\dd H_{\circlearrowleft}(y(t),u(t))}{\dd t} \leq \dot{y}(t)u(t).
\end{equation}
This inequality immediately implies that such Duhem hysteresis operator is dissipative with respect to the supply rate $\dot{y}(t)u(t)$ and has CCW input-output dynamics. The symbol $\circlearrowleft$ in $H_{\circlearrowleft}$ indicates the counterclockwise behavior of $\Phi$.

In this paper, as a dual extension to \cite{JAYAWARDHANAAUTO2012}, we focus on the clockwise (CW) hysteresis operator where the supply rate is given by $\dot{u}y$ which is dual to the supply rate $u\dot{y}$ considered in \cite{JAYAWARDHANAAUTO2012}. This is motivated by the friction induced hysteresis phenomenon in the mechanical system which has CW I/O behavior from the input relative displacement to the output friction force. One may intuitively consider to reverse the input-output relation of the CW hysteresis operator for getting the CCW I/O behavior in the reverse I/O setting. However, this consideration has two drawbacks: 1). the reverse input-output pair may not be physically realizable (this is related to the causality problem in the port-based modeling, such as, the bond graph modeling framework \cite{PETER2004}); 2). the operator itself may not be invertible (for example, if the output of the hysteresis operator can be saturated).

In Theorem \ref{thm1}, we provide sufficient conditions on the underlying functions $f_1$ and $f_2$ of the Duhem operator, such that it has CW I/O dynamics. Roughly speaking, the functions $f_1$ and $f_2$ (as defined later in Section 2) determine two possible different directions $(y,u)$ depending on whether the input $u$ is increasing or decreasing. By evaluating these two functions on two disjoint domains (which are separated by an anhysteresis curve), we can determine whether it has CW I/O dynamics using Theorem \ref{thm1}. This is shown by constructing a function $H_{\circlearrowright} : \rline^2 \rightarrow \rline_+$ such that the following inequality
\begin{equation}\label{passivehysteresis_cw}
\frac{\dd H_{\circlearrowright}(y(t),u(t))}{\dd t} \leq y(t)\dot{u}(t).
\end{equation}
holds. The function $H_{\circlearrowright}$ can also be related to the concept of available storage function from \cite{WILLEMS1972} where, instead of using the standard supply rate $yu$, we use the CW supply rate $y\dot{u}$ as shown in Proposition 1 in this paper.

The dissipativity property \rfb{passivehysteresis_cw} can be further used in the stability analysis of the systems with CW hysteresis, such as, a second-order mechanical system with hysteretic friction as discussed in Section 4.2. As an illustrative example on the application of \rfb{passivehysteresis_cw}, let us consider a mechanical system described by
\begin{equation*}
\left.\begin{array}{rl}
 m\ddot{x} & = F- F_{{ \rm friction}},\\
 F_{{ \rm friction}}& =\Phi(x,y_0), \end{array}\right.
\end{equation*}
with the hysteresis operator $\Phi$ satisfying the Dahl model as follows
\begin{equation*}
\dot{F}_{\rm friction}=\rho \left (1-\frac{F_{\rm friction}}{F_C}\right )\max\{0, \dot{x}\}+\rho\left(1+\frac{F_{\rm friction}}{F_C}\right)\min\{0,\dot{x}\},
\end{equation*}
where $m$ refers to the mass, $x$ refers to the displacement, $F$ is the applied force, $\rho>0$ describes the stiffness constant, $F_C>0$ represents the Coulomb friction constant and $y_0$ is the initial condition of the Dahl model (see, for example, \cite{OH2005}). By taking $x_1=x, x_2=\dot{x}$ and $x_3=F_{friction}$ as the state variables, we can rewrite this hysteretic system into state-space form as follows
\begin{equation*}
\left.\begin{array}{rl}
\dot{x}_1&=x_2,\\
\dot{x}_2&=\frac{F}{m}-\frac{x_3}{m},\\
\dot{x}_3&=\rho \left (1-\frac{x_3}{F_C}\right)\max\{0,x_{2}\}+\rho\left (1+\frac{x_3}{F_C}\right)\min\{0,x_{2}\}.\end{array} \right.
\end{equation*}
In Section 4.1, we obtain the function $H_{\circlearrowright}$ satisfying \rfb{passivehysteresis_cw} explicitly and it is parameterized by $\rho$ and $F_C$. Using $V(x_1,x_2,x_3)=\frac{1}{2}mx_2^2+H_{\circlearrowright}(x_3,x_1)$ as a Lyapunov function we have
 \begin{align*}
 \dot{V}&=m\dot{x}_2x_2+\frac{\dd H_{\circlearrowright}(x_3,x_1)}{\dd t}\\
        &=-x_3x_2+Fx_2+\frac{\dd H_{\circlearrowright}(x_3,x_1)}{\dd t}\\
        &\leq Fx_2.
 \end{align*}
 This inequality establishes that the closed loop system is passive from the applied force $F$ to the velocity $x_2$. Thus a simple propositional feedback $F=-dx_2$, where $d>0$, can guarantee the asymptotic convergence of the velocity $x_2$ to zero without having to know precisely the parameters $\rho$ and $F_C$.

\section{Duhem operator and clockwise hysteresis operators}\label{hysmodel}
Denote $C^1(\rline_+)$ the space of continuously differentiable functions $f:\rline_+\to \rline$ and $AC(\rline_+)$ the space of absolutely continuous functions $f:\rline_+ \rightarrow \rline$. Define $\frac{\dd z(t)}{\dd t} := \lim_{h\searrow 0^+}\frac{z(t+h)-z(t)}{h}$.

The Duhem operator $\Phi:AC(\rline_+)\times \rline \to AC(\rline_+), (u,y_0)\mapsto \Phi(u,y_0)=:y$ is described by \cite{MACKI1993, OH2005, VISINTIN1994}
\begin{equation}\label{babuskamodel}
\dot y(t) = f_1(y(t),u(t))\dot u_+(t) + f_2(y(t),u(t))\dot u_-(t),\ y(0)=y_0,
\end{equation}
where $\dot u_+(t):=\max\{0,\dot u(t)\}$, $\dot u_-(t):=\min\{0,\dot u(t)\}$. The functions $f_1$ and $f_2$ are assumed to be $C^1$.

The existence of solutions to \rfb{babuskamodel} has been reviewed in \cite{MACKI1993}. In particular, if for every $\xi \in \rline $, $f_1$ and $f_2$ satisfy
\begin{align}\label{exis_cond}
(\sigma_1 - \sigma_2)[f_1(\sigma_1,\xi) - f_1(\sigma_2,\xi)] & \leq \lambda_1(\xi)(\sigma_1 - \sigma_2)^2,\\\nonumber
(\sigma_1 - \sigma_2)[f_2(\sigma_1,\xi) - f_2(\sigma_2,\xi)] & \geq -\lambda_2(\xi)(\sigma_1 - \sigma_2)^2,
\end{align}
for all $\sigma_1$, $\sigma_2 \in \rline$, where $\lambda_1$ and $\lambda_2$ are nonnegative, then the solution to \rfb{babuskamodel} exist and $\Phi$ maps $AC(\rline_+) \times \rline \rightarrow AC(\rline_+)$. We will assume throughout the paper that the solution to \rfb{babuskamodel} exists for all $u\in AC(\rline_+)$ and $y_0 \in \rline$.

As a dual definition to counterclockwise (CCW) I/O behavior \cite{ANGELI2006}, we define the clockwise (CW) I/O dynamics as follows
\begin{definition}\label{de-CW}
An operator $Q$ is {\it clockwise} (CW) if for every $u \in U$ with the corresponding output map $y:=Qu$, where $U$ is the space of input signals such that $y$ is well-defined for all positive time, the following inequality holds
\begin{equation}\label{defCW}
\liminf_{T\rightarrow \infty}\int^T_0{y(t)\dot{u}(t)\dd t > -\infty}.
\end{equation}
\end{definition}

\vspace{0.2cm}
For the Duhem operator $\Phi$, inequality \rfb{defCW} holds if there exists a function $H_{\circlearrowright}: \rline^2 \rightarrow \rline_+$ such that for every $u\in AC(\rline_+)$ and $y_0 \in \rline$, the inequality
\begin{equation}\label{CWcon}
\frac{\dd H_{\circlearrowright}(y(t),u(t))}{\dd t} \leq y(t)\dot{u}(t),
\end{equation}
holds for all $t$ where $y := \Phi(u,y_0)$.

In the following subsections, we describe several well-known hysteresis operators which generate clockwise I/O dynamics and we recast these operators into the Duhem operator as in \rfb{babuskamodel}.

\subsection{Dahl model}\label{dahlsec}
The Dahl model \cite{DAHL1976,PADTHE2008} is commonly used in mechanical systems, which represents the friction force with respect to the relative displacement between two surfaces in contact. The general representation of the Dahl model is given by
\begin{equation}
\dot{y}(t)=\rho\left |1-\frac{y(t)}{F_c}\textrm{sgn} (\dot{u}(t))\right |^{r}\textrm{sgn} \left (1-\frac{y(t)}{F_c}\textrm{sgn}(\dot{u}(t))\right)\dot{u}(t),
\end{equation}
where $y$ denotes the friction force, $u$ denotes the relative displacement, $F_c>0$ denotes the Coulomb friction force, $\rho >0$ denotes the rest stiffness and $r\geq 1$ is a parameter that determines the shape of the hysteresis loops.

The Dahl model can be described by the Duhem hysteresis operator \rfb{babuskamodel} with
\begin{equation}
f_1(\sigma,\xi) = \rho \left | 1-\frac{\sigma}{F_c}\right |^{r} \textrm{sgn}\left (1-\frac{\sigma}{F_c}\right ),
\end{equation}
\begin{equation}
f_2(\sigma,\xi) = \rho \left | 1+\frac{\sigma}{F_c}\right |^{r} \textrm{sgn}\left (1+\frac{\sigma}{F_c}\right ).
\end{equation}
In Figure \ref{dahl}, we illustrate the behavior of the Dahl model where $F_c=0.75$, $\rho=1.5$ and $r=3$.
\begin{figure}[h]
\centering \psfrag{D}{$y(t)$}\psfrag{u}{$u(t)$}\psfrag{t}{$t$}
 \subfloat[] {{\includegraphics[width=2.4in]{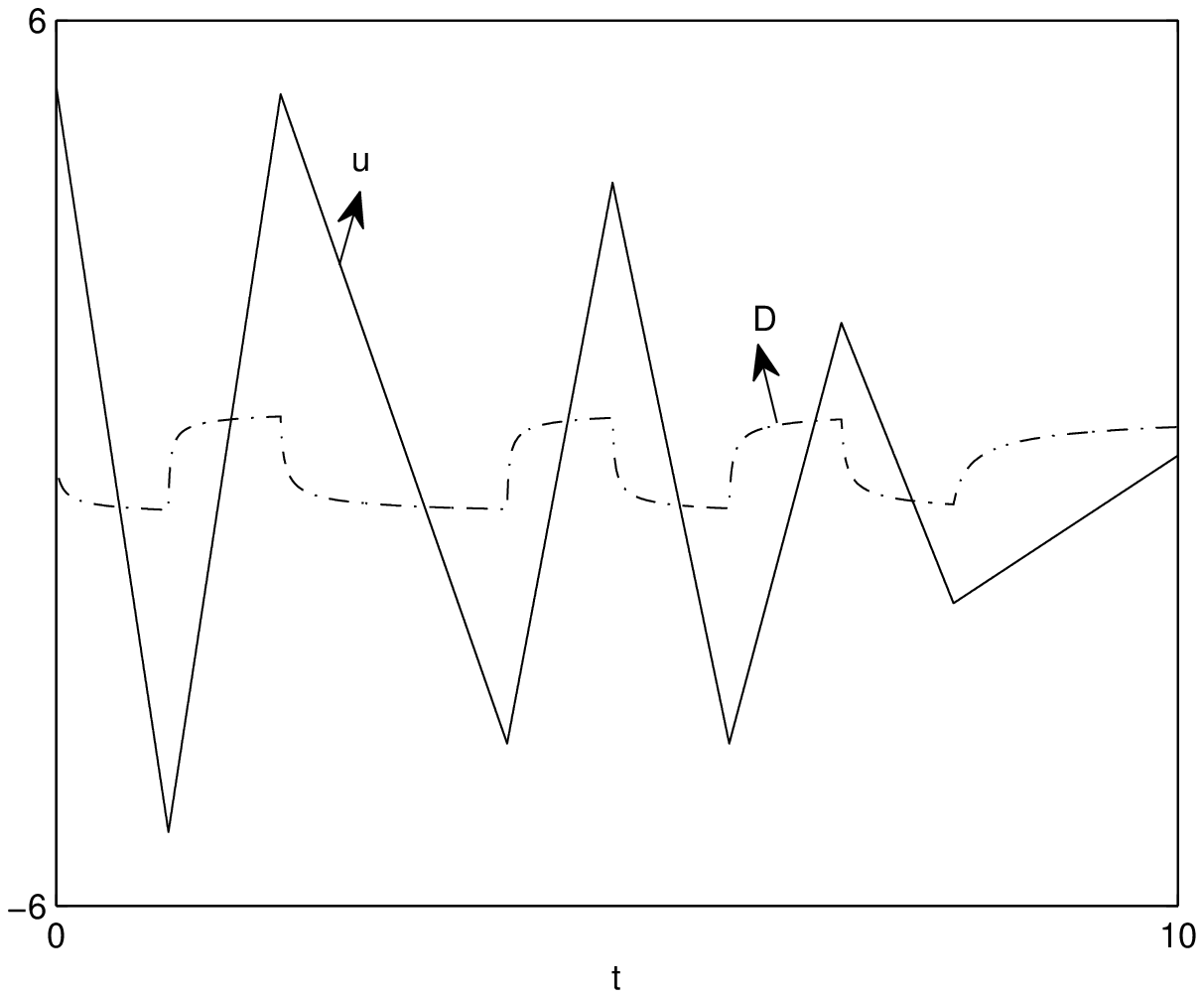}}}
 \subfloat[] {{\includegraphics[width=2.4in]{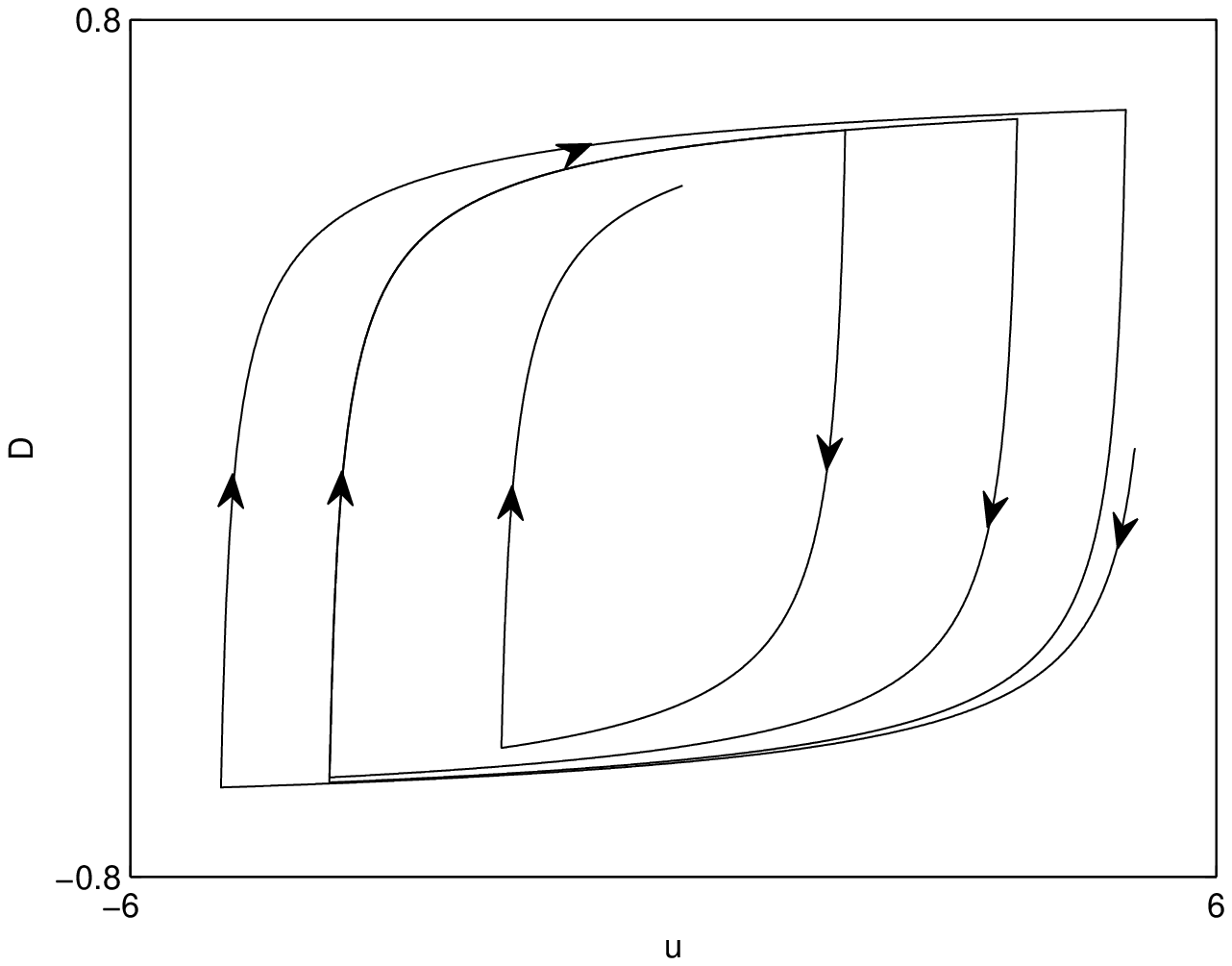}}}
\caption{The input-output dynamcis of the Dahl model with $F_c=0.75$, $\rho=1.5$ and $r=3$.}
\label{dahl} \vspace{0.2cm}
\end{figure}
\subsection{Bouc-Wen model}
The Bouc-Wen model \cite{SAIN1997, WEN1976} is commonly used to model the elastic stress-strain relationships in structures. Moreover, it is also used to represent the magnetorheological behavior in the MR damper \cite{DYKE1996}. The general representation of the Bouc-Wen model is given by
\begin{equation*}
\dot{y}(t)=\alpha \dot{u}(t)-\beta \dot{u}(t)|y(t) |^n-\gamma |\dot{u}(t)|y(t)|y(t)|^{n-1},
\end{equation*}
where $u$ denotes the displacement, $y$ denotes the elastic strain, $n\geq 1$ and $\beta, \zeta$ are the parameters determine the shape of the hysteresis curve.

The Bouc-Wen model can be described by the Duhem hysteresis operator \rfb{babuskamodel} with
\begin{equation}
f_1(\sigma,\xi) = \alpha-\beta |\sigma |^n-\zeta \sigma|\sigma|^{n-1},
\end{equation}
\begin{equation}
f_2(\sigma,\xi) = \alpha-\beta |\sigma |^n+\zeta \sigma|\sigma|^{n-1}.
\end{equation}
In Figure \ref{bouc}, we illustrate the behavior of the Bouc-Wen model where $\alpha=1$, $\beta=1$, $\zeta=1$ and $n=3$.
\begin{figure}[h]
\centering \psfrag{y}{$y(t)$}\psfrag{u}{$u(t)$}\psfrag{t}{$t$}
 \subfloat[] {{\includegraphics[width=2.4in]{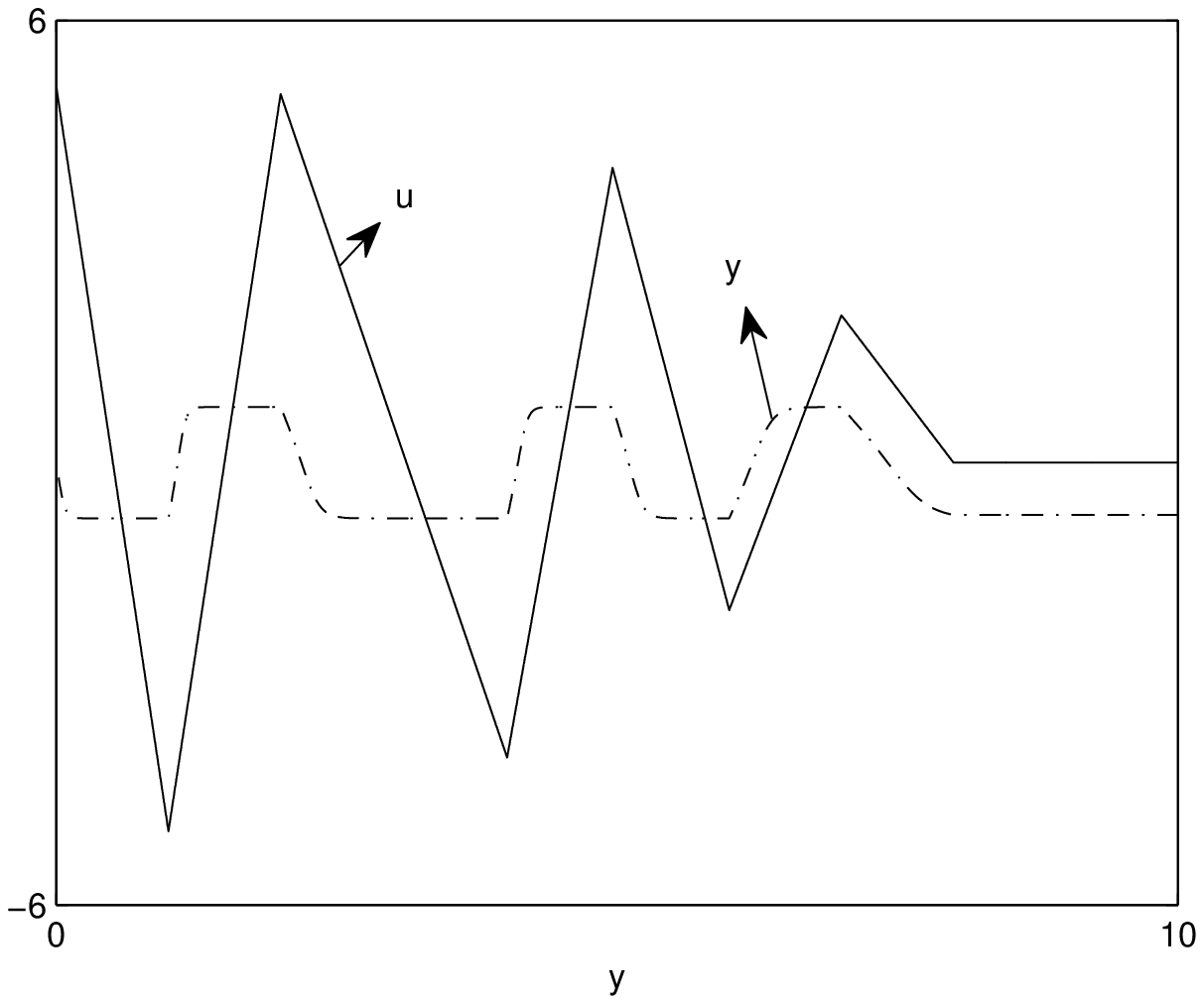}}}
 \subfloat[] {{\includegraphics[width=2.4in]{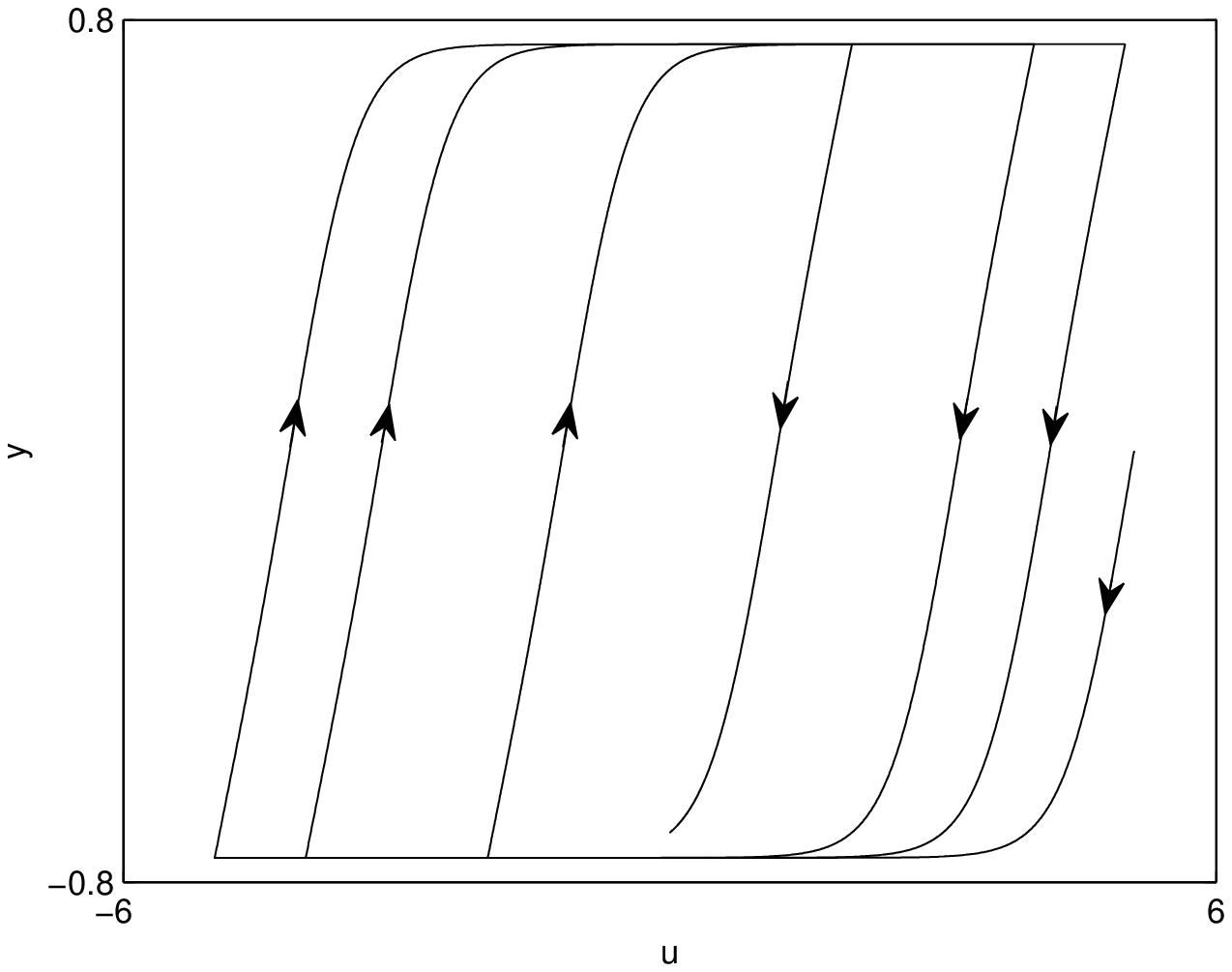}}}
\caption{The input-output dynamcis of the Bouc-Wen model with $\alpha=1$, $\beta=1$, $\zeta=1$ and $n=3$.}
\label{bouc} \vspace{0.2cm}
\end{figure}
\section{Main result}
Before stating our main contribution, we need to introduce three functions in the following subsections: an anhysteresis function $f_{an}$, a traversing function $\omega_\Phi$ and an intersecting function $\Lambda$; these functions will play an important role in the characterization of dissipativity and in the construction of the storage function. These three functions are defined based on the knowledge of $f_1$ and $f_2$. Generally speaking, the anhysteresis function $f_{an}$ defines the curve where $f_1=f_2$, the function $\omega_\Phi$ describes the trajectory of $\Phi$ when a monotone increasing $u$ or a monotone decreasing $u$ is applied from a given point in the hysteresis phase plot, and the intersecting function $\Lambda$ defines the intersection of the anhysteresis function $f_{an}$ and function $\omega_\Phi$ from a given point. The anhysteresis function $f_{an}$ and the traversing function $\omega_\Phi$ have the same definitions as given in our previous results in \cite{JAYAWARDHANACDC2011}.

\subsection{Anhysteresis function}
In order to define the anhysteresis function, we rewrite $f_1$ and $f_2$ as follows
\begin{equation}\label{duhemf1f2}
\left. \begin{array}{ll} f_1(y(t),u(t)) & = F(y(t),u(t))+G(y(t),u(t)), \\
 f_2(y(t),u(t)) &= -F(y(t),u(t))+ G(y(t),u(t)), \end{array} \right\}
\end{equation}
where $F,G: \rline^2 \rightarrow \rline$. We assume that the implicit function $F(\sigma,\xi)=0$ can be represented by an explicit function $\sigma=f_{an}(\xi)$ or $\xi=g_{an}(\sigma)$. Such function $f_{an}$ (or $g_{an}$) is called an {\it anhysteresis function} and the corresponding graph $\{(\xi,f_{an}(\xi))|\xi\in\rline\}$ is called an {\it anhysteresis curve}. Using $f_{an}$, it can be checked that $f_1(f_{an}(\xi),\xi)=f_2(f_{an}(\xi),\xi)$ holds. Note also that the functions $F$ and $G$ in \rfb{duhemf1f2} are defined by
\begin{equation*}
F = \frac{f_1-f_2}{2} \qquad G = \frac{f_1+f_2}{2}.
\end{equation*}

\subsection{Traversing function $\omega_{\Phi}$}
 For every given point $(\sigma,\xi)\in \R^2$ in the hysteresis phase plot, let
$\omega_{\Phi,1}(\cdot,\sigma,\xi):[\xi,\infty)\to \R$ be the solution $x$ of
\begin{equation*}
x(\tau) - x(\xi) = \int^\tau_{\xi}{f_1(x(\lambda),\lambda)\
\dd\lambda}\ \quad x(\xi)=\sigma \quad \forall \tau\in[\xi,\infty),
\end{equation*}
and let $\omega_{\Phi,2}(\cdot,\sigma,\xi):(-\infty,\xi]\to\rline$ be
the solution $x$ of
\begin{equation*}
x(\tau) - x(\xi) = \int_{\xi}^\tau{f_2(x(\lambda),\lambda)\
\dd\lambda}\ \quad  x(\xi)=\sigma \quad \forall \tau\in (-\infty,\xi].
\end{equation*}
Using the above definitions, for every point $(\sigma,\xi)\in \R^2$ in the hysteresis phase plot, the {\it traversing function}
$\omega_\Phi(\cdot,\sigma,\xi):\R\to \R$ is defined by the
concatenation of $\omega_{\Phi,2}(\cdot,\sigma,\xi)$ and
$\omega_{\Phi,1}(\cdot,\sigma,\xi)$:
\begin{equation}\label{wcurve}
\omega_\Phi(\tau,\sigma,\xi) = \left\{ \begin{array}{ll}
\omega_{\Phi,2}(\tau,\sigma,\xi) & \forall \tau \in (-\infty,\xi), \\
\omega_{\Phi,1}(\tau,\sigma,\xi) & \forall \tau \in [\xi,\infty).
\end{array} \right.
\end{equation}
We remark that the function $\omega_\Phi(\cdot,\sigma,\xi)$ defines the (unique) hysteresis curve where the curve $\{(\tau,\omega_\Phi(\tau,\sigma,\xi))\,|\,\tau\in (-\infty, \xi]\}$ is obtained by applying a monotone decreasing $u$ to $\Phi(\cdot,\sigma)$ with $u(0)=\xi$, $\lim_{t\to\infty}u(t)=-\infty$ and, similarly, the curve $\{(\tau,\omega_\Phi(\tau,\sigma,\xi))\,|\,\tau\in [\xi,\infty)\}$ is obtained by introducing a monotone increasing $u$ to $\Phi(\cdot,\sigma)$ with $u(0)=\xi$ and $\lim_{t\to\infty}u(t)=\infty$.\\
\subsection{Intersecting function $\Lambda$}
The intersecting function $\Lambda$ describes the intersection between the anhysteresis curve $f_{an}$ and the curve $\omega_\Phi$. The function $\Lambda:\rline^2\to \rline$ is an {\it intersecting function} (corresponding to $\omega_\Phi$ and $f_{an}$) if: i) $\omega_\Phi(\Lambda(\sigma,\xi),\sigma,\xi)=f_{an}(\Lambda(\sigma,\xi))$ for all $(\sigma,\xi)\in\rline^2$ and; ii) $\Lambda(\sigma,\xi)\leq \xi$ whenever $\sigma \geq f_{an}(\xi)$ and $\Lambda(\sigma,\xi)>\xi$ otherwise. This implies that the two functions $\omega_\Phi(\cdot,\sigma,\xi)$ and $f_{an}(\cdot)$ intersect at a unique point larger or smaller than $\xi$ depending on the sign of $\sigma - f_{an}(\xi)$. In our main result, we also need that $\frac{\dd \Lambda(y(t),u(t))}{\dd t}$ exists for every solutions $(y,u)$ of \rfb{babuskamodel}.

In the following lemma we give sufficient conditions for the existence of such intersecting function $\Lambda$.
\begin{lemma}\label{lemma_ass_intersect}
Assume that $f_1$ and $f_2$ in \rfb{duhemf1f2} be such that $f_1$, $f_2$ are $C^1$. Moreover, assume that $f_{an}$ is strictly increasing and there exists a positive real constant $\epsilon >0$ such that for all $(\sigma,\xi)\in \rline^2$ the following inequality holds
\begin{eqnarray}
\label{eq_ass_Lambda_1}
f_1(\sigma,\xi) >  \frac{\dd f_{an}(\xi)}{\dd \xi}+\epsilon & \textrm{ whenever }& \sigma > f_{an}(\xi)\ ,\\
\label{eq_ass_Lambda_2}
f_2(\sigma,\xi) >  \frac{\dd f_{an}(\xi)}{\dd \xi}+\epsilon & \textrm{ whenever }& \sigma < f_{an}(\xi)\ .
\end{eqnarray}
Then there exists an intersecting function $\Lambda \in C^1(\rline^2,\rline)$ such that
\begin{description}
\item[\bf{(1)}] $\Lambda(\sigma,\xi)\leq \xi$ whenever $\sigma \geq f_{an}(\xi)$ and $\Lambda(\sigma,\xi)>\xi$ otherwise.
\item[\bf{(2)}]
$\displaystyle
\omega_\Phi(\Lambda(\sigma,\xi),\sigma,\xi)=f_{an}(\Lambda(\sigma,\xi)).
$ \hfill \refstepcounter{equation}\label{Omega}($\theequation$)
\item[\bf{(3)}] Moreover, for all $u\in C^1$, $y:=\Phi(u,y_0)$, we have that $\frac{\dd}{\dd t} \Lambda(y(t),u(t))$ exists.
\end{description}
\end{lemma}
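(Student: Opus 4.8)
The plan is to reduce everything, for each fixed $(\sigma,\xi)$, to the scalar function $\Psi(\tau):=\omega_\Phi(\tau,\sigma,\xi)-f_{an}(\tau)$, so that by \rfb{Omega} the value $\Lambda(\sigma,\xi)$ is exactly a zero of $\Psi$, and then to run a single transversality estimate that simultaneously delivers uniqueness of that zero (hence well-definedness of $\Lambda$), the sign pattern in item (1), the nonvanishing derivative needed for $C^1$-regularity, and item (3) via the chain rule.

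First I would record the transversality estimate. On the anhysteresis curve the defining relation $F=0$ gives $f_1(f_{an}(\eta),\eta)=f_2(f_{an}(\eta),\eta)$, and letting $\sigma\downarrow f_{an}(\eta)$ in \rfb{eq_ass_Lambda_1} and $\sigma\uparrow f_{an}(\eta)$ in \rfb{eq_ass_Lambda_2}, continuity of $f_1,f_2$ yields $f_1(f_{an}(\eta),\eta)=f_2(f_{an}(\eta),\eta)\ge \tfrac{\dd f_{an}(\eta)}{\dd\eta}+\epsilon$. Consequently, at any $\tau$ with $\Psi(\tau)=0$ (where $\omega_\Phi$ obeys $\dot x=f_1$ or $\dot x=f_2$ according to the branch) one gets $\Psi'(\tau)=f_i(f_{an}(\tau),\tau)-\tfrac{\dd f_{an}(\tau)}{\dd\tau}\ge\epsilon>0$, with the obvious one-sided version at the concatenation point $\tau=\xi$. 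Thus $\Psi$ is strictly increasing through each of its zeros; two distinct zeros would force an intermediate sign change from $+$ to $-$, i.e. a zero with nonpositive slope, a contradiction. Hence $\Psi$ has at most one zero, and since $\Psi(\xi)=\sigma-f_{an}(\xi)$, this zero lies at $\tau\le\xi$ exactly when $\sigma\ge f_{an}(\xi)$ and at $\tau>\xi$ otherwise, which is precisely item (1).

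The hard part is existence of the zero. When $\sigma>f_{an}(\xi)$ the relevant crossing sits on the decreasing branch $\omega_{\Phi,2}$ (driven by $f_2$ for $\tau\le\xi$), and there the hypotheses control $f_2$ only \emph{below} the curve, so the trajectory is not a priori forced down onto $f_{an}$. To close this I would argue by continuation in $\sigma$: the set of $\sigma\ge f_{an}(\xi)$ for which a crossing exists is nonempty (it contains $f_{an}(\xi)$, giving $\tau=\xi$) and is open by the implicit function theorem together with the transversality $\Psi'\ge\epsilon$. Showing it is also closed — equivalently, comparing $\omega_{\Phi,2}(\cdot,\sigma,\xi)$ with the monotone family of anhysteresis-anchored $f_2$-flows through the points $(\eta,f_{an}(\eta))$ and invoking the standing global-existence assumption on \rfb{babuskamodel} — reduces to ruling out that the crossing point escapes to infinity as $\sigma$ grows. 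This escape estimate, in which the uniform gap-growth rate $\epsilon$ below the curve is the only available leverage, is where I expect the real work to lie. The case $\sigma<f_{an}(\xi)$ is symmetric on the $f_1$ branch.

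Finally, for $C^1$-regularity I would apply the implicit function theorem to $\Theta(\eta,\sigma,\xi):=\omega_\Phi(\eta,\sigma,\xi)-f_{an}(\eta)$ at the crossing: here $\partial_\eta\Theta=\Psi'\ge\epsilon\neq0$ by transversality, while the $C^1$-dependence of ODE flows on time, initial value and initial instant makes $\Theta$ jointly $C^1$ (the only point needing care is $\eta=\xi$, where continuity of the concatenated field $f_1=f_2$ on $f_{an}$ guarantees matching of the one-sided derivatives), so $\Lambda\in C^1(\rline^2,\rline)$. For item (3), note that $u\in C^1$ makes $\dot u_+,\dot u_-$ continuous and hence $\dot y=f_1(y,u)\dot u_++f_2(y,u)\dot u_-$ continuous, so $y=\Phi(u,y_0)\in C^1$; then $t\mapsto\Lambda(y(t),u(t))$ is a composition of $C^1$ maps and $\tfrac{\dd}{\dd t}\Lambda(y(t),u(t))$ exists by the chain rule.
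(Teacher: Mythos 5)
Your reduction to $\Psi(\tau)=\omega_\Phi(\tau,\sigma,\xi)-f_{an}(\tau)$ is exactly the paper's device (there called $\varphi$), and your uniqueness, sign-pattern, implicit-function-theorem and chain-rule steps all match the paper's proof. The genuine gap is the one you flag yourself: existence of the zero is never established, and the continuation argument you sketch cannot be completed, because under the inequalities exactly as printed the lemma is in fact false. Take $f_1(\sigma,\xi)=2+\tfrac{4}{\pi}\arctan(\sigma-\xi)$ and $f_2(\sigma,\xi)=2-\tfrac{4}{\pi}\arctan(\sigma-\xi)$: then $F=\tfrac{1}{2}(f_1-f_2)$ vanishes exactly on $\sigma=\xi$, so $f_{an}(\xi)=\xi$ is strictly increasing and $C^1$, and \rfb{eq_ass_Lambda_1}, \rfb{eq_ass_Lambda_2} hold with $\epsilon=1$ (indeed $f_1>2$ above the curve, $f_2>2$ below it, $f_{an}'\equiv 1$). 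Yet along the decreasing branch the quantity $\psi(\tau):=\omega_{\Phi,2}(\tau,\sigma,\xi)-\tau$ obeys the autonomous equation $\frac{\dd \psi}{\dd \tau}=1-\tfrac{4}{\pi}\arctan\psi$, which has the equilibrium $\psi\equiv 1$; hence for every starting point with $\sigma\geq f_{an}(\xi)+1$ the branch never meets the anhysteresis curve and $\Lambda(\sigma,\xi)$ does not exist. In this example your set of good $\sigma$'s is exactly $[f_{an}(\xi),f_{an}(\xi)+1)$: nonempty and relatively open but not closed, with the crossing escaping to $-\infty$ --- precisely the scenario you said had to be ruled out.

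What makes the paper's own proof go through is that it tacitly reads the two ``whenever'' clauses with the regions interchanged, i.e.\ $f_1(\sigma,\xi)>f_{an}'(\xi)+\epsilon$ whenever $\sigma<f_{an}(\xi)$ and $f_2(\sigma,\xi)>f_{an}'(\xi)+\epsilon$ whenever $\sigma>f_{an}(\xi)$; this is evidently the intended statement and the printed version is a typo. Two pieces of evidence: the paper's proof asserts $\partial \varphi/\partial \xi>\epsilon$ on the set $A_0$, where $\omega_\Phi$ is generated by $f_2$ and lies (up to the first crossing) \emph{above} the curve, which only the swapped pairing delivers; and in Example \ref{example1} one has $f_1\to 0.83=f_{an}'$ far above the curve, so the printed \rfb{eq_ass_Lambda_1} admits no uniform $\epsilon$, whereas the swapped inequalities hold with $\epsilon=1$. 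Under the corrected hypotheses your missing step closes in one line and no open/closed argument is needed: as long as the $f_2$-branch stays above the curve one has $\Psi'>\epsilon$, so moving leftward $\Psi$ decreases at rate at least $\epsilon$ and must vanish no later than $\tau=\xi-\Psi(\xi)/\epsilon$ (this is the paper's affine bound $\varphi(\tau,y_0,u_0)<\varphi(u_0,y_0,u_0)+\epsilon(\tau-u_0)$ on $A_0$); the case $\sigma<f_{an}(\xi)$ is symmetric along the $f_1$-branch. With that repair, your transversality estimate at the crossing gives uniqueness and the sign pattern, and the remainder of your argument stands as written.
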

The proof of Lemma \ref{lemma_ass_intersect} is given in the \ref{prooflemmaintersect}.

\begin{example}\label{example1}
{\rm In order to illustrate these functions, let us consider the Duhem operator $\Phi$ with $f_1(\sigma,\xi) =e^{0.5(-1.2\sigma+\xi)}+0.83$ and $f_2(\sigma,\xi)=e^{0.5(1.2\sigma-\xi)}+0.83$ as shown in Figure \ref{storagefig}. It can be checked that the anhysteresis function of the operator is $f_{an}(\xi)=0.83\xi$ and the functions $f_1$ and $f_2$ satisfy the hypotheses in Lemma \ref{lemma_ass_intersect}. With a reference to Figure \ref{storagefig}, let the current state of $\Phi$ be given by $(y(t),u(t))$. In this figure, the traversing function $\omega_{\phi}(\cdot,y(t),u(t))$ is depicted by the dashed-line and the anhysteresis function $f_{an}$ is shown by the thick solid-line. The point $(y(t),u(t))$ is located above the anhysteresis curve, i.e., $y(t)>f_{an}(u(t))$. It can be seen from the figure that the intersecting point $\Lambda(y(t),u(t))$ (which is shown by the solid circle) is less than $u(t)$, i.e., $\Lambda(y(t),u(t))\leq u(t)$. This shows that the property }{\bf{(1)}} {\rm in Lemma \ref{lemma_ass_intersect} holds.}
\end{example}
\begin{figure}
\centering \psfrag{Y}{$y(t)$} \psfrag{U}{$u(t)$} \psfrag{w}{$\omega_{\Phi}$}\psfrag{fan}{$f_{an}$}
\psfrag{y}{$y$}\psfrag{u}{$u$ }\psfrag{L}{$\Lambda(y(t),u(t))$}
  \includegraphics[height=2.5in]{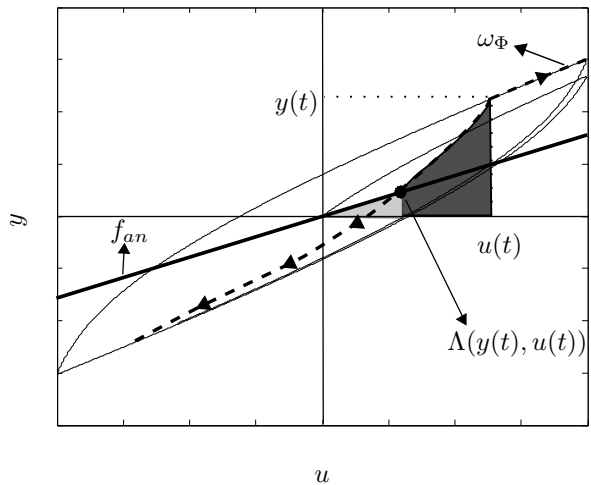}\\
  \caption{Illustration of a Duhem operator with $f_1(\sigma,\xi) =e^{0.5(-1.2\sigma+\xi)}+0.83$ and $f_2(\sigma,\xi)=e^{0.5(1.2\sigma-\xi)}+0.83$ for all $(\sigma,\xi)\in \rline^2$. The anhysteresis curve $f_{an}(\xi)=0.83\xi$ is shown by the thick solid-line. If the current state be given by $(y(t),u(t))$, the traversing function $\omega_{\phi}(\cdot,y(t),u(t))$ is depicted by the dashed-line and the intersecting point $\Lambda(y(t),u(t))$ is shown by the solid circle.}\label{storagefig}
\end{figure}

\subsection{Duhem operator with clockwise hysteresis}
Based on the three functions $\omega_{\Phi}$, $f_{an}$ and $\Lambda$, we define $H_{\circlearrowright}: \rline^2\rightarrow\rline_+$ as follows
\begin{equation}\label{storage_cw}
H_{\circlearrowright}(\sigma,\xi)=\int_{0}^{\Lambda(\sigma, \xi)}{f_{an}(\tau) \dd \tau}-\int_{\xi}^{\Lambda(\sigma,\xi)}{\omega_\Phi(\tau,\sigma,\xi) \dd \tau}.
\end{equation}

\begin{theorem}\label{thm1}
Consider the Duhem hysteresis operator $\Phi$ defined in \rfb{babuskamodel} and \rfb{duhemf1f2} with $C^1$ functions $F, G:\rline^2 \to \rline$ and with the traversing function $\omega_{\Phi}$ and the anhysteresis function $f_{an}$. Suppose that there exists an intersecting function $\Lambda$ (e.g. the hypotheses in Lemma \ref{lemma_ass_intersect} hold). Let the following condition holds for all $(\sigma,\xi)$ in $\rline^2$
\begin{description}
\item[{\bf (A)}] $F(\sigma,\xi)\geq 0$ whenever $\sigma \leq f_{an}(\xi)$, and $F(\sigma,\xi) < 0$ otherwise.
\end{description}
Then for every $u\in AC(\rline_+)$ and for every $y_0\in\rline$, the function $t\rightarrow H_{\circlearrowright}(y(t),u(t))$ with $H_{\circlearrowright}$ as in \rfb{storage_cw} and $y:=\Phi(u,y_0)$, is right differentiable and satisfies \rfb{passivehysteresis_cw}. Moreover, if the anhysteresis function $f_{an}$ satisfies $f_{an}(0)= 0$, then $H_{\circlearrowright}\geq 0$ and the Duhem operator is clockwise (CW).
\end{theorem}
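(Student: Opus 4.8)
The plan is to show that along any trajectory $(y,u)$ the right derivative of $t\mapsto H_{\circlearrowright}(y(t),u(t))$ collapses, after substantial cancellation, to $y\dot u$ minus a single integral whose sign is controlled by hypothesis \textbf{(A)}. Writing $H_{\circlearrowright}=A-B$ with $A(\sigma,\xi)=\int_0^{\Lambda}f_{an}$ and $B(\sigma,\xi)=\int_\xi^{\Lambda}\omega_\Phi(\cdot,\sigma,\xi)$, and abbreviating $\Lambda(t):=\Lambda(y(t),u(t))$, $\dot\Lambda:=\frac{\dd}{\dd t}\Lambda(t)$ (which exists by part \textbf{(3)} of Lemma~\ref{lemma_ass_intersect}), I differentiate both pieces with the Leibniz rule. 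The two defining properties of the intersecting function — that $\omega_\Phi(\Lambda,\sigma,\xi)=f_{an}(\Lambda)$ and that $\omega_\Phi(\xi,\sigma,\xi)=\sigma=y$ — make the boundary contributions cancel: the term $f_{an}(\Lambda)\dot\Lambda$ coming from $A$ is exactly the upper-limit term $\omega_\Phi(\Lambda,\cdot)\dot\Lambda$ from $B$, while the lower-limit term yields $+y\dot u$. What survives is
\[
\frac{\dd}{\dd t}H_{\circlearrowright}(y(t),u(t)) = y(t)\dot u(t) - \int_{u(t)}^{\Lambda(t)} v(\tau)\,\dd\tau, \qquad v(\tau):=\frac{\partial}{\partial t}\omega_\Phi(\tau,y(t),u(t)),
\]
so the whole of \rfb{passivehysteresis_cw} rests on showing $\int_{u(t)}^{\Lambda(t)} v\,\dd\tau\ge 0$.

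The heart of the argument — and the step I expect to be hardest — is the sign analysis of this integral through a variational equation. Since $\omega_\Phi$ solves $\partial_\tau\omega_\Phi=f_i(\omega_\Phi,\tau)$, with $i=1$ on the increasing branch $\tau\ge\xi$ and $i=2$ on the decreasing branch $\tau\le\xi$, differentiating in the parameter $t$ shows that $v$ obeys the \emph{linear} ODE $\partial_\tau v=\partial_\sigma f_i(\omega_\Phi,\tau)\,v$; hence $v$ keeps a constant sign in $\tau$ and is fixed by its value at $\tau=u(t)$. That value I compute by differentiating the identity $\omega_\Phi(u(t),y(t),u(t))=y(t)$ in $t$, which gives $v(u(t))=\dot y-f_i(y,u)\dot u$. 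This equals $2F(y,u)\dot u$ when the motion uses $f_1$ but the integration runs over a decreasing branch, equals $-2F(y,u)\dot u$ in the dual situation, and equals $0$ when the state simply slides along its own traversing curve. I then split into the four cases set by the sign of $\dot u$ and the position of $(y,u)$ relative to $f_{an}$; in each case hypothesis \textbf{(A)} (the sign of $F$ above and below the anhysteresis curve), the sign of $\dot u$, and the orientation of the interval between $u(t)$ and $\Lambda(t)$ (recorded by part \textbf{(1)} of Lemma~\ref{lemma_ass_intersect}) combine so that $v$ and the orientation of integration force $\int_{u(t)}^{\Lambda(t)}v\ge 0$. Thus \rfb{passivehysteresis_cw} holds, with equality precisely in the branch-following cases.

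For nonnegativity of $H_{\circlearrowright}$ under $f_{an}(0)=0$, I would rewrite \rfb{storage_cw} as
\[
H_{\circlearrowright}(\sigma,\xi)=\int_0^{\xi}f_{an}(\tau)\,\dd\tau+\int_\xi^{\Lambda}\bigl[f_{an}(\tau)-\omega_\Phi(\tau,\sigma,\xi)\bigr]\,\dd\tau.
\]
The first term is $\ge 0$ because $f_{an}$ is strictly increasing through the origin, so $f_{an}(\tau)$ has the sign of $\tau$ and $\int_0^\xi f_{an}\ge 0$ for either sign of $\xi$. For the second term, uniqueness of the intersection point $\Lambda$ guarantees that $\omega_\Phi(\cdot,\sigma,\xi)-f_{an}$ does not change sign between $\xi$ and $\Lambda$; evaluating it at $\tau=\xi$ (where it equals $\sigma-f_{an}(\xi)$) together with part \textbf{(1)} of Lemma~\ref{lemma_ass_intersect} shows that the integrand and the orientation of integration always combine to a nonnegative quantity. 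Hence $H_{\circlearrowright}\ge 0$.

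Finally the clockwise property follows by integrating \rfb{passivehysteresis_cw}: one obtains $\int_0^T y\dot u\,\dd t \ge H_{\circlearrowright}(y(T),u(T))-H_{\circlearrowright}(y_0,u(0))\ge -H_{\circlearrowright}(y_0,u(0))$, a finite bound independent of $T$, which yields \rfb{defCW}. The one genuinely technical loose end is the passage from $C^1$ inputs, for which the chain-rule and variational computation above are classical, to general $u\in AC(\rline_+)$: there the right derivative exists only almost everywhere, and one recovers the integrated inequality from the a.e. bound using absolute continuity of $t\mapsto H_{\circlearrowright}(y(t),u(t))$.
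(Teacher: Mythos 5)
Your proposal is correct and follows the paper's global skeleton: the same Leibniz-rule computation with boundary terms cancelling via $\omega_\Phi(\Lambda,\sigma,\xi)=f_{an}(\Lambda)$ and $\omega_\Phi(\xi,\sigma,\xi)=\sigma$, arriving at exactly the paper's identity \rfb{Hdot}; the same reduction of \rfb{passivehysteresis_cw} to the sign condition \rfb{posofw}; the same rewriting of \rfb{storage_cw} for nonnegativity; and the same integration argument for the CW property. Where you genuinely diverge is in the central technical step, namely the existence and sign of $\int_{u(t)}^{\Lambda(t)}\partial_t\omega_\Phi(\tau,y(t),u(t))\,\dd\tau$. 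The paper handles this by a comparison-principle argument: it introduces $\omega_\epsilon(\tau)=\omega_\Phi(\tau,y(t+\epsilon),u(t+\epsilon))$, uses hypothesis {\bf (A)} together with non-crossing of solutions of the same ODE to order $\omega_\epsilon$ against $\omega_0$, and then proves existence of the limit $\lim_{\epsilon\searrow 0^+}\epsilon^{-1}[\omega_\epsilon(\tau)-\omega_0(\tau)]$ separately via a Gronwall estimate. You instead differentiate the flow in its initial data: $v=\partial_t\omega_\Phi$ solves the linear homogeneous variational equation $\partial_\tau v=\partial_\sigma f_i(\omega_\Phi,\tau)\,v$, hence cannot change sign, and its one-sided boundary value $v(u(t)^{\pm})=\dot y-f_i(y,u)\dot u\in\{2F\dot u,\,-2F\dot u,\,0\}$ ties the sign directly to {\bf (A)}. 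This buys existence and sign in a single stroke (both follow from $C^1$ dependence on initial conditions, i.e. the same Hartman theorem the paper already invokes in the proof of Lemma \ref{lemma_ass_intersect}), and it makes the equality cases transparent: your ``sliding'' cases with $v\equiv 0$ are precisely the paper's \rfb{proofeq}. The paper's longer route needs only uniqueness and comparison, so it asks slightly less regularity of $f_1,f_2$ at that step. Two caveats: your four-case sign check is asserted rather than carried out (it does go through; e.g. for $\dot u>0$ and $y>f_{an}(u)$ one gets $F<0$, so $v<0$ on $[\Lambda(t),u(t)]$, and the reversed orientation of $\int_{u(t)}^{\Lambda(t)}$ makes the integral nonnegative), and the loose end you flag for merely absolutely continuous $u$ — that $\dot u$, hence the pointwise inequality, exists only almost everywhere — is a gap the paper's own proof shares, since it too conditions on the sign of $\dot u(t)$; recovering \rfb{defCW} by integrating the a.e.\ bound, as you propose, is the honest fix.
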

The proof of Theorem \ref{thm1} is given in the \ref{proofofthm1}.

\begin{remark}
In addition to the result in Theorem \ref{thm1}, if $f_1$ and $f_2$ satisfy the hypotheses given in Theorem \ref{thm1}, then for every $u\in AC(\rline_+)$ and $y_0 \in \rline$, the function $t \rightarrow H_{\circlearrowright}(y(t),u(t))$ with $H_{\circlearrowright}$ as in \rfb{storage_cw} is left-differentiable and satisfies
\begin{equation*}
\lim_{h\nearrow 0^-}\frac{H_{\circlearrowright}(y(t+h),u(t+h))-H_{\circlearrowright}(y(t),u(t))}{h}\leq y(t)\dot{u}(t).
\end{equation*}
The proof of this claim follows a similar line as that of Theorem \ref{thm1}.
\end{remark}

In order to depict the storage function $H_{\circlearrowright}$ that is constructed in Theorem \ref{thm1}, we recall again the example of the Duhem operator $\Phi$ in Example \ref{example1} where $f_1=e^{0.5(-1.2y+u)}+0.83$ and $f_2=e^{0.5(1.2y-u)}+0.83$, and it is shown in Figure \ref{storagefig}. Based on the functions $f_{an}$ and $\omega_{\Phi}(\cdot,y(t),u(t))$ as shown in Figure \ref{storagefig} and following the construction of the storage function $H_{\circlearrowright}$ as in \rfb{storage_cw}, the first component on the RHS of \rfb{storage_cw} corresponds to the light grey area in Figure \ref{storagefig}. Correspondingly, the second component on the RHS of \rfb{storage_cw} refers to the dark grey area in Figure \ref{storagefig}. The summation of these two areas gives the storage function $H_{\circlearrowright}$ for a given state $(y(t),u(t))$ satisfying \rfb{passivehysteresis_cw} according to Theorem \ref{thm1}.

The principle of the construction of $H_{\circlearrowright}$ in \rfb{storage_cw} can be described in words as follows. From a given state $(y(0),u(0))$, let us define the trajectory that crosses the anhysteresis curve at a given time $T$ by applying either a monotonically increasing input signal $u(t)=u(0)+t$ or a monotonically decreasing input signal $u(t)=u(0)-t$. Denote this trajectory by $y$ and the intersecting point by $(y(T),u(T))$. Then the storage function $H_{\circlearrowright}$ is given by the integral of the anhysteresis function from $0$ to $u(T)$ minus the integral of $y$ from $0$ to $T$.

\begin{proposition}
Consider the Duhem operator $\Phi$ satisfying the hypotheses in Theorem \ref{thm1}. Moreover, we assume that the anhysteresis function $f_{an}$ satisfies $f_{an}(\xi)=0$ for all $\xi \in \rline$. Then for every $y_0,u_0\in\rline$, the function $H_{\circlearrowright}$ as in \rfb{storage_cw} satisfies
\begin{equation*}
H_{\circlearrowright}(y_0,u_0)=\sup\limits_{\substack{u\in AC(\rline_+)\\u(0)=u_0}}-\int_0^T{y(\tau)\dot{u}(\tau)\dd \tau},
\end{equation*}
 where $y:=\Phi(u,y_0)$. In other words, $H_{\circlearrowright}$ defines the available storage function (as discussed in \cite{WILLEMS1972}) where the supply rate is given by $y\dot{u}$ (instead of $yu$ as in \cite{WILLEMS1972}).
\end{proposition}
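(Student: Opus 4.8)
The plan is to establish the two inequalities relating $H_{\circlearrowright}(y_0,u_0)$ to the available-storage expression, where I read the right-hand side as a supremum taken over all admissible inputs $u\in AC(\rline_+)$ with $u(0)=u_0$ \emph{and} over all terminal times $T\geq 0$, in the sense of Willems. The decisive simplification is that with $f_{an}\equiv 0$ the first integral in \rfb{storage_cw} vanishes, so that $H_{\circlearrowright}(\sigma,\xi)=-\int_\xi^{\Lambda(\sigma,\xi)}\omega_\Phi(\tau,\sigma,\xi)\,\dd\tau$. Because $f_{an}(0)=0$, Theorem \ref{thm1} already supplies the two ingredients I need: the dissipation inequality \rfb{CWcon} along every solution, and the nonnegativity $H_{\circlearrowright}\geq 0$.

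First I would prove that the supremum is bounded above by $H_{\circlearrowright}(y_0,u_0)$. Fix any admissible $u$ and any $T\geq 0$. Since $t\mapsto H_{\circlearrowright}(y(t),u(t))$ is right-differentiable and satisfies \rfb{CWcon} by Theorem \ref{thm1}, and is absolutely continuous (the maps $\omega_\Phi,f_{an},\Lambda$ being $C^1$ and $y,u$ absolutely continuous), I may integrate \rfb{CWcon} from $0$ to $T$ to get $H_{\circlearrowright}(y(T),u(T))-H_{\circlearrowright}(y_0,u_0)\leq \int_0^T y(\tau)\dot u(\tau)\,\dd\tau$. Rearranging and dropping the nonnegative term $H_{\circlearrowright}(y(T),u(T))$ gives $-\int_0^T y(\tau)\dot u(\tau)\,\dd\tau\leq H_{\circlearrowright}(y_0,u_0)$, and taking the supremum over $u$ and $T$ yields one inequality.

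For the reverse inequality the idea is to exhibit an input that attains the bound, namely the monotone signal whose trajectory is exactly the curve $\omega_\Phi(\cdot,y_0,u_0)$ that defines $H_{\circlearrowright}$. If $y_0\geq f_{an}(u_0)=0$ then $\Lambda(y_0,u_0)\leq u_0$ and I take the decreasing input $u(t)=u_0-t$ on $[0,T]$ with $T:=u_0-\Lambda(y_0,u_0)$; if $y_0<0$ then $\Lambda(y_0,u_0)>u_0$ and I take $u(t)=u_0+t$ on $[0,T]$ with $T:=\Lambda(y_0,u_0)-u_0$. In both cases $y(t)=\omega_\Phi(u(t),y_0,u_0)$ by the very definition of the traversing function, and the substitution $s=u(t)$ converts $-\int_0^T y(\tau)\dot u(\tau)\,\dd\tau$ into $-\int_{u_0}^{\Lambda(y_0,u_0)}\omega_\Phi(s,y_0,u_0)\,\dd s$, which is exactly $H_{\circlearrowright}(y_0,u_0)$. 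Hence the supremum is at least $H_{\circlearrowright}(y_0,u_0)$, and combining the two bounds proves the equality.

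I expect the attainment step to be the main obstacle, not the upper bound. The point to get right is that the maximizing trajectory is precisely the one that drives the state onto the anhysteresis curve: at its endpoint the state satisfies $y(T)=f_{an}(u(T))=0$, so $\Lambda(y(T),u(T))=u(T)$ and the defining integral of $H_{\circlearrowright}$ collapses to zero. This is what forces \rfb{CWcon} to hold with equality along this particular input, so that no admissible input can extract more. The only delicate bookkeeping is the sign of $\dot u$ and the orientation of the change-of-variables integral in the two cases, both of which are handled uniformly once the trajectory is written as $\omega_\Phi(u(t),y_0,u_0)$.
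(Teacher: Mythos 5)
Your proposal is correct and takes essentially the same route as the paper: both obtain the upper bound by integrating the clockwise dissipation inequality established in Theorem \ref{thm1} (and discarding the nonnegative terminal storage), and both attain the supremum with the monotone input that drives the trajectory along $\omega_\Phi(\cdot,y_0,u_0)$ until it reaches the anhysteresis curve at $\Lambda(y_0,u_0)$. The only presentational difference is that you verify attainment by a direct change of variables $s=u(t)$ in the cost integral, whereas the paper works from the integrated identity for $\frac{\dd}{\dd t}H_{\circlearrowright}$ and shows that, for this input, both residual terms (the terminal storage and the double integral of $\frac{\dd}{\dd t}\omega_\Phi$) vanish --- substantively the same argument.
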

\begin{proof}
%
As given in the first part of the proof of Theorem \ref{thm1}, we have
\begin{equation}\label{Hdot2}
\frac{\dd}{\dd t}H_{\circlearrowright}(y(t),u(t)) =\dot{u}(t) y(t) - \int_{u(t)}^{u^*(t)}{\frac{\dd}{\dd t} \omega_\Phi(\tau,y(t),u(t)) \dd\tau}.
\end{equation}
Integrating \rfb{Hdot2} from $t=0$ to $T$, we obtain
\begin{equation*}
H_{\circlearrowright}(T)-H_{\circlearrowright}(0) = \int_{0}^{T}{y(\tau)\dot{u}(\tau) \dd \tau} - \int_{0}^{T}{\int_{u(t)}^{u^*}{\frac{\dd}{\dd t}\omega_{\Phi}(\tau,y(t),u(t)\dd \tau \dd t,}}
\end{equation*}
where $u^*=\Lambda(y(t),u(t))$ and we have used the shorthand notation of $H_\circlearrowright(t):=H_\circlearrowright(y(t),u(t))$.

By rearranging the terms in this equation, we arrive at
\begin{equation}\label{avai_div1}
-\int_{0}^{T}{y(\tau)\dot{u}(\tau) \dd \tau}=H_{\circlearrowright}(0)-H_{\circlearrowright}(T)- \int_{0}^{T}{\int_{u(t)}^{u^*}{\frac{\dd}{\dd t}\omega_{\Phi}(\tau,y(t),u(t)\dd \tau \dd t}}.
\end{equation}
The supremum of the LHS of \rfb{avai_div1} over all possible $u$ and $T$ defines the available storage function where the supply rate is $y\dot u$. Note that this supply rate is a particular class of the general supply rate as studied in \cite{TRENTELMANN1997, WILLEMS1972}. Since the last two terms on the RHS of \rfb{avai_div1} is non-positive, we will show that we can define $u$ and $T$ such that these two terms are equal to zero, and thus the supremum of the LHS of \rfb{avai_div1} is equal to $H_{\circlearrowright}(y(0),u(0))$, which is equivalent to $H_{\circlearrowright}(y_0,u_0)$, i.e., $H_\circlearrowright$ is the available storage function.

From a given initial condition $(y_0,u_0)$, let us introduce an input signal $u(t)=u_0(T-t)+t\Lambda(y_0,u_0)$ for all $t\in[0, T]$ and $u(t)=\Lambda(y_0,u_0)$ otherwise. This means that we have an input signal $u$ which starts from $u_0$, ends at $\Lambda(y_0,u_0)$ at $t=T$ and remains there for all $t>T$. Together with the corresponding signal $y=\Phi(u,y_0)$, we have $\Lambda(y(t),u(t))=\Lambda(y_0,u_0)$ for all $t$, i.e. the intersecting point is always the same. Indeed, this follows from the fact that $\Lambda(y(t),u(t))$ remains the same along the trajectories that converge to the intersection point $(\omega_{\Phi}(u^*,y_0,u_0),u^*)$ where $u^*=\Lambda(y_0,u_0)$.

Following the same arguments as in the proof of Theorem \ref{thm1} (c.f., the arguments that lead to Eq. \rfb{proofeq}), this input signal ensures that the last term on the RHS of \rfb{avai_div1} is equal to zero. Since $u(T)=\Lambda(y_0,u_0)$ for all $t>T$, we also have that $H_\circlearrowright(y(t),u(t))=0$ for all $t>T$, i.e. the second term on the RHS of \rfb{avai_div1} is zero using such an input signal. Hence $H_{\circlearrowright}$ as in \rfb{storage_cw} is an available storage function.
\end{proof}

The results given in Theorem \ref{thm1} can be slightly generalized in order to incorporate the case when the Duhem hysteresis operator $\Phi$ has saturated output. Consider the set $D\subset \rline^2$ which contains all relations of $\Phi$, i.e., $\calR_{y_0,u}:=\{(y(t),u(t))\in \rline^2|y=\Phi(u,y_0),t\in \rline_+\}\subset D$ holds for all $u\in AC(\rline_+)$ and $(y_0,u(0))\in D$. For example, the set $D$ for the Dahl model in Section \ref{hysmodel} is given by $D=(-F_C, F_C)\times \rline$. Using $D$, we can generalize Theorem \ref{thm1} as follows.

\begin{proposition}\label{prop2}
Consider the Duhem hysteresis operator $\Phi$ defined in \rfb{babuskamodel} and \rfb{duhemf1f2} with $C^1$ functions $F, G:D \to \rline$ and with the traversing function $\omega_{\Phi}$ and the anhysteresis function $f_{an}$. Assume that the anhysteresis curve is in $D$ and there exists an intersecting function $\Lambda$ (e.g., the hypotheses in Lemma \ref{lemma_ass_intersect} hold). Assume further that the Assumption {\bf (A)} holds for all $(\sigma,\xi)$ in $D$. Then for every $u\in AC(\rline_+)$ and $(y_0,u(0))\in D$, the function $t\rightarrow H_{\circlearrowright}(y(t),u(t))$ with $H_{\circlearrowright}$ as in \rfb{storage_cw} and $y:=\Phi(u,y_0)$ is right differentiable and satisfies \rfb{passivehysteresis_cw}. Moreover, if the anhysteresis function $f_{an}$ satisfies $f_{an}(0)= 0$, then $H_{\circlearrowright}\geq 0$ and the Duhem operator is clockwise (CW).
\end{proposition}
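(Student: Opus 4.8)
The plan is to treat Proposition \ref{prop2} as the localization of Theorem \ref{thm1} to the set $D$: the entire argument establishing \rfb{passivehysteresis_cw} in Theorem \ref{thm1} is reproduced verbatim, with $\rline^2$ replaced by $D$ everywhere, once it is verified that every point at which the hypotheses must be evaluated actually lies in $D$. Thus the only genuinely new work is a confinement (invariance) check; given that, Assumption {\bf (A)}, now assumed only on $D$, is legitimately applicable at each such point and the conclusion follows.

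First I would record the invariance property that does all the work. By hypothesis $\calR_{y_0,u}\subset D$ for every $u\in AC(\rline_+)$ and every $(y_0,u(0))\in D$, so any solution $(y(t),u(t))$ of \rfb{babuskamodel} issued from a point of $D$ remains in $D$ for all $t$. In particular, fixing a time $t$ and writing $(\sigma,\xi)=(y(t),u(t))\in D$, the portion of the traversing curve $\omega_\Phi(\cdot,\sigma,\xi)$ between $\xi$ and $u^\ast:=\Lambda(\sigma,\xi)$ is itself the response of $\Phi$ to a monotone (increasing or decreasing) input started at $(\sigma,\xi)\in D$; hence this whole arc lies in $D$. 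Since the anhysteresis curve is assumed to be contained in $D$, its endpoint $(f_{an}(u^\ast),u^\ast)$ and the entire graph $\{(f_{an}(\tau),\tau)\}$ entering the first integral of \rfb{storage_cw} also lie in $D$. Consequently $H_{\circlearrowright}(y(t),u(t))$ in \rfb{storage_cw} is well-defined, and the time-derivatives $\frac{\dd}{\dd t}\Lambda(y(t),u(t))$ (existing by Lemma \ref{lemma_ass_intersect}) and $\frac{\dd}{\dd t}\omega_\Phi(\tau,y(t),u(t))$ require $F,G$ only at points of $D$, where they are $C^1$ by hypothesis.

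With confinement in hand, I would differentiate $t\mapsto H_{\circlearrowright}(y(t),u(t))$ exactly as in the proof of Theorem \ref{thm1}. The Leibniz rule applied to the two integrals in \rfb{storage_cw}, together with the defining identity $\omega_\Phi(u^\ast,\sigma,\xi)=f_{an}(u^\ast)$ (which makes the boundary contributions at the intersection $\tau=u^\ast$ cancel, while the lower limit of the second integral supplies the term $y(t)\dot{u}(t)$ through $\omega_\Phi(\xi,\sigma,\xi)=\sigma$), yields the same identity \rfb{Hdot2}, namely $\frac{\dd}{\dd t}H_{\circlearrowright}(y(t),u(t)) = \dot{u}(t)\,y(t)-\int_{u(t)}^{u^\ast}\frac{\dd}{\dd t}\omega_\Phi(\tau,y(t),u(t))\,\dd\tau$. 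It then remains to show that the integral term is non-negative. This is precisely where Assumption {\bf (A)} is invoked pointwise along the traversing arc; since that arc lies in $D$ by the previous step, (A) applies at each of its points and the sign argument of Theorem \ref{thm1} carries over unchanged, giving \rfb{passivehysteresis_cw}. The extension of right-differentiability from $C^1$ inputs to general $u\in AC(\rline_+)$ is inherited from the corresponding step in Theorem \ref{thm1}.

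Finally, for the non-negativity of $H_{\circlearrowright}$ under $f_{an}(0)=0$, I would repeat the area interpretation used in Theorem \ref{thm1}: $H_{\circlearrowright}(\sigma,\xi)$ is the signed area enclosed between the anhysteresis curve and the traversing arc, both of which lie in $D$, and Assumption {\bf (A)} forces this area to be non-negative; integrating \rfb{passivehysteresis_cw} then yields the clockwise property of Definition \ref{de-CW}. The main obstacle is really the single confinement step: one must ensure that no evaluation of $F$, $G$, $f_{an}$, $\omega_\Phi$ or $\Lambda$ in the Theorem \ref{thm1} computation ever strays outside $D$, since outside $D$ the data are undefined and (A) is not assumed. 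I expect this to reduce entirely to the observation above that traversing arcs are $\Phi$-trajectories under monotone inputs and are therefore trapped in the forward-invariant set $D$; once that is noted, nothing else in the Theorem \ref{thm1} proof uses the global domain $\rline^2$.
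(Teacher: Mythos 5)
Your proposal is correct and takes essentially the same approach as the paper: the paper's entire proof of Proposition~\ref{prop2} is the single remark that it ``follows the same arguments as that of Theorem~\ref{thm1}'', which is precisely your localization strategy. Your explicit confinement step --- that traversing arcs are $\Phi$-responses to monotone inputs issued from points of $D$, hence trapped in the invariant set $D$ (invariance being built into the definition of $D$), so that every evaluation of $F$, $G$, $f_{an}$, $\omega_\Phi$ and $\Lambda$ in the Theorem~\ref{thm1} computation stays inside $D$ --- is exactly the detail the paper leaves implicit, and you have supplied it correctly.
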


The proof follows the same arguments as that of Theorem \ref{thm1}.


\section{Examples}
\subsection{The function $H_{\circlearrowright}$ for the Dahl model}
Recall the Dahl model as defined in Section \ref{dahlsec} and consider the case when $r =1$. In this case, the Dahl model can be reformulated into the Duhem operator as in \rfb{babuskamodel} with
\begin{equation}\label{f1dahl}
f_1(\sigma,\xi) = \rho \left ( 1-\frac{\sigma}{F_c}\right ),\ f_2(\sigma,\xi) = \rho \left ( 1+\frac{\sigma}{F_c}\right ),
\end{equation}
where $\rho >0$ and $F_c>0$. It is immediate to check that the conditions as given in \rfb{exis_cond} are satisfied, which means there exists solution for this Duhem operator for all positive time. The anhysteresis function of the Dahl model is $f_{an}(\xi) = 0$.

Calculating the curve $\omega_{\Phi}$, we have
\begin{equation}\label{omgdahl}
\omega_{\Phi}(\tau,y(t),u(t))=\left \{ \begin{array}{ll} F_c+(y(t)-F_c)e^{\frac{\rho}{F_c}(u(t)-\tau)} & \ \tau\in[u(t),\ \infty),\\
                              -F_c+(y(t)+F_c)e^{\frac{\rho}{F_c}(\tau -u(t))} & \ \tau\in(-\infty,\ u(t)]. \end{array} \right.
\end{equation}
From \rfb{f1dahl} and \rfb{omgdahl}, it is immediate to see that the pair $(y,u)$ is well-defined in $D=(-F_C,F_C)\times \rline$.
The intersecting function $\Lambda(y(t),u(t))$ is given as follows
\begin{equation}\label{intersectionfun}
\Lambda(y(t),u(t))=\left \{ \begin{array}{ll} u(t)+\frac{F_c}{\rho}\ln{\frac{F_c}{y(t)+F_c}} & \ y(t) \geq 0,\\
                              u(t)-\frac{F_c}{\rho}\ln{\frac{-F_c}{y(t)-F_c}} & \ y(t) < 0. \end{array} \right.
\end{equation}
Since $f_1$ and $f_2$ in \rfb{f1dahl} satisfy the Assumption {\bf (A)} for all $(\sigma,\xi)\in D$, the result in Proposition 2 holds.

By denoting $u^*(t)=\Lambda(y(t),u(t))$, we can compute explicitly the function $H_{\circlearrowright}$ as follows

%
\begin{align}\nonumber
H_{\circlearrowright}(y(t),u(t))&=\left \{ \begin{array}{ll} -F_c(u(t)-u^*(t))+\frac{F_c}{\rho}(y(t)+F_c)(1-e^{\frac{\rho}{F_c}(u^*(t)-u(t))}) & \ y(t) \geq 0\\
                               F_c(u(t)-u^*(t))+\frac{F_c}{\rho}(y(t)-F_c)(e^{\frac{\rho}{F_c}(u(t)-u^*(t))}-1) & \ y(t) < 0 \end{array} \right.\\ \label{storageDahl}
                               &=\left \{ \begin{array}{ll} \frac{F_C^2}{\rho}\ln{\frac{F_c}{y(t)+F_c}} +\frac{F_C}{\rho}y(t)& \ y(t) \geq 0\\
                               \frac{F_C^2}{\rho}\ln{\frac{-F_c}{y(t)-F_c}} -\frac{F_C}{\rho}y(t)& \ y(t) < 0. \end{array} \right.
\end{align}
Indeed, it can be checked that $\dot{H}_{\circlearrowright}\leq \dot{u}(t)y(t)$.
\subsection{Stability analysis of a second-order mechanical system with hysteretic friction}
Now, let us consider an  example of a mechanical system with the Dahl friction model given by $m\ddot{x}+d\dot{x}+kx+\Phi(x)=0$, where $m>0$, $d>0$, $k>0$, the hysteresis operator $\Phi$ is given as in \rfb{f1dahl} with $\rho>0$ and $F_c>0$. As discussed before, the functions $f_1$ and $f_2$ satisfy the hypotheses of Proposition 2.

The state space representation of the system is given as follows
\begin{equation*}
\left.\begin{array}{rl}
\dot{x}_1&=x_2,\\
\dot{x}_2&=-\frac{k}{m}x_1-\frac{d}{m}x_2-\frac{x_3}{m},\\
\dot{x}_3&=\rho \left (1-\frac{x_3}{F_C}\right)x_{2+}+\rho\left (1+\frac{x_3}{F_C}\right)x_{2-}.\end{array} \right.
\end{equation*}
Using $V(x_1,x_2,x_3)=\frac{1}{2}kx_1^2+\frac{1}{2}mx_2^2+H_{\circlearrowright}(x_3,x_1)$, where $H_{\circlearrowright}$ is as in \rfb{storageDahl} and satisfies \rfb{passivehysteresis_cw}, a routine calculation shows that
\begin{align*}
\dot V & \leq  -x_2 x_3 -dx_2^2 + x_3x_2 \\
& = -dx_2^2.
\end{align*}
Since the relations of the corresponding Dahl operator \big(i.e. the set $\calR_{y_0,u}:=\{(y(t),u(t))|y=\Phi(u,y_0)\}$\big) is contained in $(-F_C,F_C)\times \rline$ for all $y_0 \in (-F_C,F_C)$ and $u \in AC(\rline_+)$, then it implies that $x_3$ (which is the output of the Dahl operator) is bounded and lies in the interval $(-F_C,F_C)$. Additionally, we have $V$ which is lower bounded and radially unbounded in the first and second arguments, i.e. $V(x_1,x_2,x_3)\rightarrow \infty$ as $\left \|\sbmn{x_1\\x_2} \right \|\rightarrow \infty$. Thus $\dot{V} \leq -dx_2^2$ implies that the state trajectory $(x_1,x_2)$ is bounded. Moreover, using the boundedness of $(x_1,x_2)$ and the boundedness of $x_3$, an application of the Lasalle's invariance principle shows that $(x_1,x_2,x_3)$ converges to the largest invariant set where $x_2=0$. By analyzing the corresponding state equations, this invariant set is given by $\{(x_1,x_2,x_3)|kx_1=-x_3,\ x_2=0\}$.

\section{Conclusion}
In this paper, we have investigated the clockwise I/O dynamics of a class of Duhem hysteresis operator by obtaining sufficient conditions for the Duhem operators to be CW. The CW property is obtained via the construction of a suitable function satisfying the CW dissipation inequality which can be useful for studying stability of systems having CW hysteretic element, such as, mechanical systems with hysteretic friction. The sufficient conditions for CW I/O dynamics incorporates also the knowledge of anhysteresis function which is commonly neglected in the literature of hysteretic systems. For systems identification of hysteresis systems, the results provide additional characterization of the Duhem operators that can be used to restrict the class of the Duhem operators which will be fitted with the measurement data.

\bibliographystyle{model1b-num-names}

\appendix
\section{Proof of Lemma \ref{lemma_ass_intersect}}\label{prooflemmaintersect}
\begin{proof}
The proof is similar to the proof of \cite[Lemma 3.1]{JAYAWARDHANACDC2011}. Consider the continuous function $\varphi:\rline^3\rightarrow\rline$ defined as $\varphi(\xi,y_0, u_0) =  \omega_\Phi(\xi,y_0,u_0)-f_{an}(\xi)$.
Consider also $A_0$ and $A_1$ the two subsets of $\rline^3$ defined as,
\begin{eqnarray*}
&A_0= \{(\xi,y_0,u_0)\in\rline^3, \ y_0>f_{an}(u_0)\ ,\ \xi< u_0\}\ ,\\
&A_1= \{(\xi,y_0,u_0)\in\rline^3, \ y_0<f_{an}(u_0)\ ,\ \xi> u_0\}\ .
\end{eqnarray*}
Note that the function $f_{an}$ being strictly increasing by assumption, implies that these sets are open sets.
Also, the function $\omega_\Phi$ satisfies
\begin{align*}
\frac{\partial \omega_\Phi}{\partial \xi}(\xi,y_0,u_0) &= f_2(\omega_\Phi(\xi,y_0,u_0),\xi)\qquad \forall (\xi,y_0,u_0)\ \in\ A_0\ ,\\
\frac{\partial \omega_\Phi}{\partial \xi}(\xi,y_0,u_0) &= f_1(\omega_\Phi(\xi,y_0,u_0),\xi) \qquad \forall (\xi,y_0,u_0)\ \in\ A_1\ .
\end{align*}
Consequently, $\omega_\Phi(\xi,y_0,u_0)$ is the solution of ordinary differential equations computed from $C^1$ vector field. With \cite[Theorem V.3.1]{HARTMAN1982}, it implies that $\omega_\Phi$ is a $C^1$ function in $A_0\cup A_1$.
Moreover, the function $f_{an}$ being $C^1$ implies that the function $\varphi$ is $C^1$ in $A_0\cup A_1$.
With \rfb{eq_ass_Lambda_1} and (\ref{eq_ass_Lambda_2}), the function $\varphi$ satisfies,
\begin{eqnarray*}
\frac{\partial \varphi}{\partial \xi}(\xi,y_0,u_0)  &>& \epsilon \neq 0\ ,\ \forall (\xi,y_0,u_0)\ \in\ A_0\cup A_1\ .
\end{eqnarray*}
Consequently, $\varphi$ is a strictly increasing function in its first argument in the set $A_0\cup A_1$.
This also implies that,
\begin{align*}
\varphi(\xi,y_0,u_0) &< \varphi(u_0,y_0,u_0) + \epsilon(\xi-u_0)\ \\ & \qquad \forall (\xi,y_0,u_0)\ \in\ A_0\ ,\\
\varphi(\xi,y_0,u_0) &> \varphi(u_0,y_0,u_0) + \epsilon(\xi-u_0)\ \\ & \qquad \forall (\xi,y_0,u_0)\ \in\ A_1\ .
\end{align*}
Note that if $y_0>f_{an}(u_0)$, then $\varphi(u_0,y_0,u_0)>0$ and consequently there exists a unique real number $u^*$ such that $\varphi(u^*,y_0,u_0)=0$ and $(u^*,y_0,u_0)\in A_0$. On the other hand, if $y_0<f_{an}(u_0)$, then $\varphi(u_0,y_0,u_0)<0$ and consequently there exists a unique real number $u^*$ such that $\varphi(u^*,y_0,u_0)=0$ and $(u^*,y_0,u_0)\in A_1$. Therefore, by denoting $\Lambda(y_0,u_0)=u^*$, by employing the implicit function theorem and using the fact that $\varphi$ is $C^1$, it can be shown that $\Lambda$ is $C^1$.
\end{proof}
\section{Proof of Theorem \ref{thm1}}\label{proofofthm1}
\begin{proof}
The proof of Theorem \ref{thm1} follows the same line as in our previous work \cite{JAYAWARDHANACDC2011}. In the first part of the proof we will prove that for all $t\in\rline_+$, $\dot H_{\circlearrowright}\big(y(t), u(t)\big)$ exists and satisfies \rfb{passivehysteresis_cw}. In the second part we show the non-negativeness of $H_{\circlearrowright}\big(y(t), u(t)\big)$.

To show that $H_{\circlearrowright}$ exists, let us denote $u^*:=\Lambda(y,u)$. Using the Leibniz derivative rule, we have
\begin{align}\nonumber
\frac{\dd}{\dd t}H_{\circlearrowright}(y(t),u(t)) & = \frac{\dd}{\dd t}\left[ \int_{0}^{\Lambda(y(t), u(t))}{f_{an}(\tau) \dd \tau}-\int_{u(t)}^{\Lambda(y(t),u(t))}{\omega_\Phi(\tau,y(t),u(t)) \dd \tau}\right]\\\nonumber
&=\dot{\Lambda}(y(t),u(t))f_{an}(\Lambda(y(t),u(t)))-\dot{\Lambda}(y(t),u(t))\omega_{\Phi}(\Lambda(y(t),u(t)),y(t),u(t))\\\nonumber
&+\dot{u}(t)\omega_{\Phi}(u(t),y(t),u(t)) -\int_{u(t)}^{\Lambda(y(t),u(t))}{\frac{\dd \omega_{\Phi}(\tau,y(t),u(t))}{\dd t}\dd \tau}\\\label{Hdot}
&=\dot{u}(t) y(t) - \int_{u(t)}^{u^*(t)}{\frac{\dd}{\dd t} \omega_\Phi(\tau,y(t),u(t)) \dd\tau},
\end{align}
where $u^*(t)=\Lambda(y(t),u(t))$ and the last equation is due to $\omega_{\Phi}(u(t),y(t),u(t))=y(t)$ and by the hypothesis given in Lemma \ref{lemma_ass_intersect}.
The first term in the RHS of
\rfb{Hdot} exists for all $t\geq 0$ since $u(t)$ satisfies
\rfb{babuskamodel}.
In order to get \rfb{passivehysteresis_cw}, it remains to check whether
the last term of \rfb{Hdot} exists, is finite and satisfies
\begin{equation}\label{posofw}
\int_{u(t)}^{u^*(t)}{\frac{\dd}{\dd t} \omega_\Phi(\tau,y(t),u(t)) \dd\tau}\geq 0.
\end{equation}
It suffices to show that, for every
$\tau\in [u(t), u^*(t)]$, the following limit
\begin{equation}\label{rderivativeofw_anhys}
\lim_{\epsilon\searrow 0^+} \frac{1}{\epsilon} [\omega_\Phi(\tau,
y(t+\epsilon), u(t+\epsilon))-\omega_\Phi(\tau, y(t), u(t))]
\end{equation}
exist and the limit of \rfb{rderivativeofw_anhys} is greater or equal to zero when $u^*(t)>u(t)$ and the
limit is less or equal to zero elsewhere.


For any $\epsilon\geq 0$, let us introduce the continuous function
$\omega_\epsilon : \rline \rightarrow \rline$ by
\begin{equation}\label{definewe}
\omega_\epsilon(\tau)= \omega_\Phi(\tau, y(t+\epsilon),
u(t+\epsilon)).
\end{equation}
More precisely, using \rfb{wcurve}, $\omega_{\epsilon}$ is
the unique solution of
\begin{equation}\label{wh}
\omega_\epsilon(\tau)= \left\{
\begin{array}{l}
y(t+\epsilon) + \displaystyle\int_{u(t+\epsilon)}^\tau{f_1( \omega_\epsilon(s), s) \dd s} \quad \forall\tau \geq u(t+\epsilon)\\[1.5em]
y(t+\epsilon) + \displaystyle\int_{u(t+\epsilon)}^\tau{f_2(
\omega_\epsilon(s), s) \dd s} \quad
\forall\tau \leq u(t+\epsilon).
\end{array}\right.
\end{equation}
Note that $\omega_0(\tau)=\omega_\Phi(\tau, y(t), u(t))$ as in \rfb{wcurve} for all $\tau\in\rline$ and
\begin{equation}
\omega_\epsilon(u(t+\epsilon)) = y(t+\epsilon) \qquad \forall \;
\epsilon\;\in\;\rline_+\ .\label{whproperty}
\end{equation}

In order to show the existence of \rfb{rderivativeofw_anhys} and the validity of \rfb{posofw}, we consider
several cases depending on the sign of $\dot u(t)$ and $F(y(t),u(t))$.
It can be checked that the hypothesis {\bf{(A)}} on $F$ implies that
$f_1(y(t),u(t))\geq f_2(y(t),u(t))$ whenever $y(t) \leq f_{an}(u)$, and $f_1(y(t),u(t))< f_2(y(t),u(t))$ otherwise.

First, we assume that $\dot u(t) > 0$ and $y(t) \geq f_{an}(u(t))$. In this case, according to Lemma \ref{lemma_ass_intersect}, we have $u^*(t) < u(t)$. Since $\dot u(t) > 0$, there exists $\gamma > 0$ such that $\tau
\leq u(t)< u(s) $ for all $s$ in $(t,
t+\gamma)$. It follows from \rfb{wh} and assumption ({\bf A}) that
for every $\epsilon\in (0,\gamma)$:

\begin{align*}
\nonumber\frac{\dd \omega_\epsilon(u(s))}{\dd s} & =  f_2( \omega_\epsilon(u(s)), u(s))\;\dot u(s) \\
& \geq f_1( \omega_\epsilon(u(s)), u(s))\;\dot u(s) \quad \forall s \in [t,t+\epsilon],
\end{align*}
and the function $\omega_0$ satisfies
$$
\frac{\dd \omega_0(u(s))}{\dd s}\;=\;f_1( y(s), u(s))\;\dot u(s) \qquad
\forall  s\;\in\;[t, t+\epsilon].
$$
Since the functions $\epsilon \mapsto w_0(u(t+\epsilon))$ and $\epsilon
\mapsto y(t+\epsilon)$ with $\epsilon \in (0,\gamma]$ are two $C^1$
functions which are solutions of the same locally Lipschitz ODE and
with the same initial value. By uniqueness of solution, we get
$\omega_0(u(t+\epsilon)) = y(t+\epsilon)$.

This together with the fact that $\omega_\epsilon(u(t+\epsilon)) = y(t+\epsilon)$ and using the
comparison principle (in reverse direction), we get that for every
$\epsilon\in [0,\gamma)$:
$$
\omega_\epsilon(u(s)) \;\leq\; \omega_0(u(s)) \qquad \forall \; s\;\in\;[t,
t+\epsilon].
$$
Since the two functions $\omega_\epsilon(\tau)$ and $\omega_0(\tau)$
for $\tau \in [u^*(t),u(t)]$ are two solutions of the same ODE, it
follows that \footnote{Otherwise there exist $\tau_1 <\tau_2$ such
that $\omega_\epsilon(\tau_1)\;=\; \omega_0(u(\tau_1))$ and
$\omega_\epsilon(\tau_2)\;>\; \omega_0(u(\tau_2))$ which contradict
the uniqueness of the solution of the locally Lipschitz ODE.}
$\omega_\epsilon(\tau)\;\geq\; \omega_0(\tau)$ 
and we get that if it exists:
\begin{equation}\label{limitofwhwo_anhys}
\lim_{\epsilon\searrow 0^+} \frac{1}{\epsilon}
[\omega_\epsilon(\tau)-\omega_0(\tau)] \;\leq\; 0 \qquad \forall
\tau \in [u^*(t),u(t)] .
\end{equation}
Then it is clear that
\begin{equation}\label{limitofwhwo_anhys1}
\lim_{\epsilon\searrow 0^+} \frac{1}{\epsilon}
[\omega_\epsilon(\tau)-\omega_0(\tau)] \;\geq\; 0 \qquad \forall
\tau \in [u(t),u^*(t)] .
\end{equation}
In the following, we show the existence of the limit given in \rfb{limitofwhwo_anhys} by computing a bound on the function
$\epsilon\mapsto \frac{1}{\epsilon} [\omega_\epsilon(\tau)-\omega_0(\tau)]$. Note that for every $\epsilon\in [0,\gamma]$,
\begin{align*}
& |\omega_\epsilon(\tau)-\omega_0(\tau)| \leq |y(t+\epsilon)-y(t)| + \left|\int_{u(t+\epsilon)}^{u(t)}f_2(\omega_\epsilon(s),s)\,\dd s\right|  \\ & \qquad \qquad + \left|\int_{u(t)}^\tau f_2(\omega_\epsilon(s),s)-f_2(\omega_0(s),s)\,\dd s \right| \\
& \leq |y(t+\epsilon)-y(t)| +
\int_{u(t)}^{u(t+\epsilon)}|f_2(\omega_\epsilon(s),s)|\,\dd s \\ & \qquad \qquad +
\int_{\tau}^{u(t)}|f_2(\omega_\epsilon(s),s)-f_2(\omega_0(s),s)|\,\dd
s,
\end{align*}
for all $\tau \in [u^*(t),u(t)]$. By the locally Lipschitz property of
$f_2$ and by the boundedness of
$\omega_\epsilon$ on $[\tau,u(t)]$ for all $\epsilon\in [0,\gamma]$, it can be shown that there exists $\alpha$, such that $\alpha$ is a bound of $f_2$ on a compact set. Then
\begin{multline*}
|\omega_\epsilon(\tau)-\omega_0(\tau)|\leq |y(t+\epsilon)-y(t)| \\ +
\int_{\tau}^{u(t)}\,L\,|\omega_\epsilon(s)-\omega_0(s)|\,\dd s +
\alpha|u(t+\epsilon)-u(t)|\ ,
\end{multline*}
where $L$ is the Lipschitz constant of $f_2$ on $[\omega_{\min{}},
\omega_{\max{}}]\times [\tau,u(t)]$ with
\begin{align*}
\omega_{\min{}} & = \min_{(c,s)\in[0,\gamma]\times[\tau, u(t)]}\omega_c(s), \\
\omega_{\max{}} & = \max_{(c,s)\in[0,\gamma]\times[\tau,
u(t)]}\omega_c(s)\ .
\end{align*}
With Gronwall's lemma, this implies that for every $\epsilon\in [0,\gamma]$
\begin{multline*}
|\omega_\epsilon(\tau)-\omega_0(\tau)| \leq
\exp((u(t)-\tau)L)\Big[|y(t+\epsilon)-y(t)| +
\alpha|u(t+\epsilon)-u(t)|\Big],
\end{multline*}
for all $\tau \in [u^*(t),u(t)]$. Hence
\begin{multline*}
\lim_{\epsilon\searrow 0^+} \frac{1}{\epsilon}
|\omega_\epsilon(\tau)-\omega_0(\tau)| \leq
\exp((u(t)-\tau)L)\Big[|f_1(y(t),u(t))|+\alpha\Big]\,\dot u(t),
\end{multline*}
for all $\tau \in [u^*(t),u(t)]$.
Consequently the limit given in (\ref{limitofwhwo_anhys}) exists.
It implies that the inequality \rfb{posofw} holds when $\dot u(t)>0$ and $y(t)\geq f_{an}(u(t))$.

For the next case, we assume that $\dot u(t) > 0$ and $y(t) < f_{an}(u(t))$. Again, according to Lemma \ref{lemma_ass_intersect}, we have $u^*(t)>u(t)$. Since for every $\epsilon\in (0,\gamma]$ the two functions $\omega_\epsilon(\tau)$
and $\omega_0(\tau)$ satisfy the same ODE for\footnote{we have for
all $\tau \in [u(t+\epsilon),u^*(t)]$~:
$$\frac{\dd \omega_\epsilon(\tau)}{\dd\tau} \;=\; f_1( \omega_\epsilon(\tau), \tau) \quad, \qquad
\frac{\dd \omega_0(\tau)}{\dd\tau} \;=\; f_1( \omega_0(\tau),
\tau)$$ } $\tau \in [u(t+\epsilon),u^*(t)]$, we have
$$
\omega_\epsilon(\tau) = \omega_0(\tau) \qquad \forall \tau \in
[u(t+\epsilon),u^*(t)],
$$
for all $\epsilon\in [0,\gamma]$. This implies that
\begin{equation}\label{proofeq}
\lim_{\epsilon\searrow 0^+} \frac{1}{\epsilon}
[\omega_\epsilon(\tau)-\omega_0(\tau)] \;=\; 0.
\end{equation}
We can use similar arguments to prove that \rfb{posofw} is satisfied when $\dot u(t) < 0$.

Finally, when $\dot u(t)=0$, we simply get $$\lim_{\epsilon\searrow
0^+} \frac{1}{\epsilon} |\omega_\epsilon(\tau)-\omega_0(\tau)|=0,$$
by continuity of the above bound.


For the second step, we need to show that $H_{\circlearrowright}$ is non-negative. Consider the case when $y(t) \geq f_{an}(u(t))$,  we have $u^*(t) < u(t)$ and $\omega_{\Phi}(\tau) \geq f_{an}(\tau)$ for all $\tau \in [u^*(t),  u(t)]$ by Lemma \ref{lemma_ass_intersect}. Since $f_{an}(\tau)$ belongs to the sector $[0 ,\ \infty)$ for all $\tau \in \rline$, we have
\begin{equation*}
H_{\circlearrowright}(y(t),u(t))=\int_{0}^{u(t)}{f_{an}(\tau) \dd \tau}+\int_{u(t)}^{u^*(t)}{f_{an}(\tau)-\omega_\Phi(\tau,y(t),u(t)) \dd \tau} \geq 0.
\end{equation*}
In case when $y(t) < f_{an}(u(t))$, we can show the non-negativeness of $H$ by using similar arguments.
\end{proof}
\end{document}